\theoremstyle{plain}
\newtheorem{thm}{Theorem}[section]
\newtheorem{lemma}[thm]{Lemma}
\newtheorem{prop}[thm]{Proposition}
\theoremstyle{definition}
\newtheorem{rmk}[thm]{Remark}
\newtheorem{example}[thm]{Example}
\def\dim{\mathop{\hbox {dim}}\nolimits}
\newcommand{\fra}{\mathfrak{a}}
\newcommand{\frg}{\mathfrak{g}}
\newcommand{\frh}{\mathfrak{h}}
\newcommand{\frk}{\mathfrak{k}}
\newcommand{\frl}{\mathfrak{l}}
\newcommand{\frp}{\mathfrak{p}}
\newcommand{\frq}{\mathfrak{q}}
\newcommand{\frs}{\mathfrak{s}}
\newcommand{\frt}{\mathfrak{t}}
\newcommand{\fru}{\mathfrak{u}}
\newcommand{\bbC}{\mathbb{C}}
\newcommand{\bbH}{\mathbb{H}}
\newcommand{\bbR}{\mathbb{R}}
\newcommand{\bbZ}{\mathbb{Z}}
\newcommand{\caL}{\mathcal{L}}
\newcommand{\caR}{\mathcal{R}}
\newcommand{\be}{\begin {equation}}
\newcommand{\ee}{\end {equation}}
\newcommand{\bp}{\begin {proof}}
\newcommand{\ep}{\end {proof}}
\begin{document}

\title[A finiteness result for Dirac cohomology]
{Unitary representations with Dirac cohomology:\\ finiteness in the real case}

\author{Chao-Ping Dong}
\address[Dong]{Mathematics and Science College, Shanghai Normal University, Shanghai 200234,
P.~R.~China}
\email{chaopindong@163.com}
\thanks{Dong is supported by NSFC grant 11571097 and the China Scholarship Council.}

\abstract{Let $G$ be a complex connected  simple algebraic group with a fixed real form
$\sigma$. Let $G(\bbR)=G^\sigma$ be the corresponding group of real points. This paper reports a finiteness  theorem for the classification of irreducible unitary Harish-Chandra modules of $G(\bbR)$ (up to equivalence) having non-vanishing Dirac cohomology. Moreover, we study the distribution of the spin norm along Vogan pencils for certain $G(\bbR)$, with particular attention paid to the unitarily small convex hull introduced by Salamanca-Riba and Vogan.}
\endabstract

\subjclass[2010]{Primary 22E46.}

\keywords{Dirac cohomology, good range, unitary representation, u-small convex hull.}

\maketitle

\section{Introduction}

In representation theory of Lie groups, the Dirac operator was firstly introduced by  Parthasarathy  to give geometric construction for most of the discrete series representations \cite{P1}, whose algebraic parametrization was achieved by Harish-Chandra \cite{HC1,HC2}. This project was completed by Atiyah and Schmid: they realized all the discrete series in the kernel of the Dirac operator \cite{AS}.

To understand the unitary dual of a real reductive Lie group better, Vogan \cite{Vog97} formulated the notion of Dirac cohomology in 1997, and conjectured that whenever non-zero, Dirac cohomology should reveal the infinitesimal character of the original module. This conjecture was confirmed by Huang and Pand\v zi\'c \cite{HP} in 2002, see Theorem \ref{thm-HP}. Since then,
Dirac cohomology became a new invariant for unitary representations of real reductive Lie groups, and classifying all the irreducible unitary representations (up to equivalence) with non-vanishing Dirac cohomology became an interesting problem which remained open. Among the entire unitary dual, as we shall see from \eqref{Dirac-unitary}, these representations are exactly the extreme ones  in the sense of Parthasarathy's Dirac operator inequality \cite{P1,P2}. Thus understanding them thoroughly should also be important.

An effective way to construct unitary representations is using cohomological induction.  For instance, Salamanca-Riba \cite{Sa} proved that
any irreducible unitary representation with strongly regular infinitesimal character
is cohomologically induced from a one-dimensional representation. Inspired by the work of Huang, Kang and Pand\v zi\'c \cite{HKP}, a formula for Dirac cohomology of cohomologically induced modules was obtained whenever the inducing modules are weakly good \cite{DH}. However, it seems rather hard to get a similar unifying formula  when the weakly good range condition is dropped. This point has perplexed us for quite a long time.
Recently, in the special case of \emph{complex} Lie groups (viewed as real Lie groups), we have proved that beyond the good range, there are at most finitely many irreducible unitary modules with non-zero Dirac cohomology, see Theorem A of \cite{DD}. The first aim of the current paper is to generalize this result to \emph{real} reductive Lie groups.

Now let us be more precise. Let $G$ be a complex connected  simple algebraic group with finite center.
Let $\sigma: G \to G$ be a \emph{real form} of $G$. That is, $\sigma$ is an antiholomorphic Lie group automorphism and $\sigma^2={\rm Id}$. Let $\theta: G\to G$ be the involutive algebraic automorphism of $G$ corresponding to $\sigma$ via Cartan theorem (see Theorem 3.2 of \cite{ALTV}). Put $G(\bbR)=G^{\sigma}$ as the group of real points. Denote by $K:=G^{\theta}$ a maximal compact subgroup of $G$, and put $K(\bbR):=K^{\sigma}$. Denote by $\frg_0$ the Lie algebra of $G(\bbR)$, and let $\frg_0=\frk_0\oplus \frp_0$ be the  Cartan decomposition corresponding to $\theta$ on the Lie algebra Level. Denote by $\frh_{f, 0}=\frt_{f, 0}\oplus \fra_{f, 0}$ the unique $\theta$-stable fundamental Cartan subalgebra of $\frg_0$. That is, $\frt_{f, 0}\subseteq \frk_0$ is maximal abelian.   As usual, we drop the subscripts to stand for the complexified Lie algebras. For example, $\frg=\frg_0\otimes_{\bbR}\bbC$, $\frh_f=\frh_{f, 0}\otimes_{\bbR}\bbC$ and so on. We fix a non-degenerate invariant symmetric bilinear form $B(\cdot, \cdot)$ on $\frg$. Its restrictions to $\frk$, $\frp$, etc., will also be denoted by the same symbol. Then the Dirac cohomology of an irreducible  $(\frg, K(\bbR))$ module $\pi$ is defined as the $\widetilde{K(\bbR)}$-module
\begin{equation}\label{def-Dirac-cohomology}
H_D(\pi)=\text{Ker}\, D/ (\text{Im} \, D \cap \text{Ker} D),
\end{equation}
where $\widetilde{K(\bbR)}$ is the pin double covering group of $K(\bbR)$.
Here the Dirac operator $D$ acts on $\pi\otimes S_G$, and $S_G$ is a spin module of the Clifford algebra $C(\frp)$. We care the most about the case that $\pi$ is unitary. Then $D$ is self-adjoint with respect to a natural inner product on $\pi\otimes S_G$, $\text{Ker} D \cap \text{Im} D=0$, and
\begin{equation}\label{Dirac-unitary}
H_D(\pi)=\text{Ker}\, D=\text{Ker}\, D^2.
\end{equation}
Parthasarathy's Dirac operator inequality now says that $D^2$ has non-negative eigenvalue on any $\widetilde{K(\bbR)}$-type of $\pi\otimes S_G$. Moreover, by Theorem 3.5.2 of \cite{HP2}, it becomes equality on some  $\widetilde{K(\bbR)}$-types of $\pi\otimes S_G$ if and only if $H_D(\pi)$ is non-vanishing.

Let $\widehat{G(\bbR)}^{\mathrm{d}}$ be the set of all equivalence classes of irreducible unitary representations of $G(\bbR)$ with non-zero Dirac cohomology. Our main result is the following.

\medskip
\noindent\textbf{Theorem A.} \,
\emph{For all but finitely many exceptions, any member $\pi$ in  $\widehat{G(\bbR)}^{\mathrm{d}}$
 is cohomologically induced from a member $\pi_{L(\bbR)}$ in $\widehat{L(\bbR)}^{\mathrm{d}}$ which is in the good range. Here $L(\bbR)$ is a proper $\theta$-stable Levi subgroup of $G(\bbR)$.}
\medskip

In the setting of the above theorem, we call the finitely many exceptions the \emph{scattered part} of  $\widehat{G(\bbR)}^{\mathrm{d}}$. For a fixed $G(\bbR)$, our proof of Theorem A will actually give a method to pin down the scattered part of $\widehat{G(\bbR)}^{\mathrm{d}}$. To figure out the other members of $\widehat{G(\bbR)}^{\mathrm{d}}$, Theorem A says that it boils down to look at $\widehat{L(\bbR)}^{\mathrm{d}}$ for the finitely many $\theta$-stable Levis of $G(\bbR)$. Working within the good range, we can do cohomological induction in stages, see Corollary 11.173 of Knapp and Vogan \cite{KV}. Thus, like the special case of complex Lie groups \cite{DD, D17}, we have that $\widehat{L(\bbR)}^{\mathrm{d}}$ is built up with central unitary characters and the scattered part of $\widehat{L(\bbR)}^{\mathrm{d}}_{\rm ss}$ which contains only \emph{finitely many} members. Here $L(\bbR)_{\rm ss}$ denotes the derived group of $L(\bbR)$. Therefore,  Theorem A actually leads to \emph{a finite algorithm} for  pinning  down $\widehat{G(\bbR)}^{\mathrm{d}}$.
From this aspect, we view it as a finiteness result.


Among the entire set $\widehat{G(\bbR)}^{\mathrm{d}}$, its scattered part should be viewed as the ``kernel" thus deserves particular attention. Conjecture 5.7 of Salamanca-Riba and Vogan \cite{SV} tries to reduce the classification of unitary representations to the classification of those containing a unitarily small (\emph{u-small} for short) $K(\bbR)$-type.  The reader can refer to Definition 6.1 of \cite{SV} for the key notion of u-small. It will also be recalled under certain assumptions soon. Inspired by this unified conjecture, and based on our previous calculations \cite{DD,D17}, we propose the following.

\medskip
\noindent\textbf{Conjecture B.}
\emph{Take any $\pi$ in the scattered part of $\widehat{G(\bbR)}^{\mathrm{d}}$. Then any $K(
\bbR)$-type of $\pi$ contributing to $H_D(\pi)$ must be unitarily small.}
\medskip

Our next result aims to collect some evidence for the above conjecture. More precisely, we will  investigate the distribution of the spin norm \cite{D} along Vogan pencils \cite{V80}, with particular attention paid to the u-small convex hull. By specifying a Vogan diagram for $\frg_0$, we have chosen a positive root system $\Delta^+(\frg, \frh_f)$. When restricted to $\frt_f$, we have
\begin{equation}\label{restricted-pos-roots}
\Delta^+(\frg, \frt_f)=\Delta^+(\frk, \frt_f) \cup \Delta^+(\frp, \frt_f).
\end{equation}
Let $\rho$ (resp., $\rho_c$) be the half sum of the positive roots in $\Delta^+(\frg, \frt_f)$ (resp., $\Delta^+(\frk, \frt_f)$).
Throughout this paper, the positive root system $\Delta^+(\frk, \frt_f)$ is fixed once for all.

For the remaining part of this section, we assume that {\bf $\frk$ has no center}. Namely, $\frg_0$ is not Hermitian symmetric.
Let $\{\gamma_1, \dots, \gamma_l\}$ be the simple roots of $\Delta^+(\frk, \frt_f)$, and let $\{\varpi_1, \dots, \varpi_l\}$ be the corresponding fundamental weights. We will refer to a $\frk$-type $E_{\mu}$---an irreducible finite-dimensional representation of $\frk$---by its highest weight $\mu=[a_1, \dots, a_l]$, which stands for $a_1\varpi_1+\cdots+a_l\varpi_l$. We denote by $\Lambda$ the weight lattice for $\Delta^+(\frk, \frt_f)$, and collect the dominant weights as $\Lambda^+$. Let $C$ be the dominant Weyl chamber for $\Delta^+(\frk, \frt_f)$, and collect all the non-negative integer combinations of $\gamma_1$, ..., $\gamma_l$ as $\Pi$.

Let
\begin{equation}
R(\Delta(\frp, \frt_f))=\left\{   \sum_{\alpha\in\Delta(\frp, \frt_f)} b_{\alpha} \alpha\mid 0\leq b_{\alpha}\leq 1\right\}.
\end{equation}
This convex set is invariant under $W(\frk, \frt_f)$, and it is  the \emph{u-small convex hull} introduced by Salamanca-Riba and Vogan in \cite{SV}.  We call a $\frk$-type $E_{\mu}$ \emph{u-small} if its highest weight $\mu$ lies in $R(\Delta(\frp, \frt_f))$; otherwise, we would say that $E_{\mu}$ is \emph{u-large}. The notion \emph{spin norm} will be recalled in \eqref{spin-norm}.  Since $\frg_0$ is assumed to be not Hermitian symmetric, the $\frk$-representation $\frp$ is irreducible.
Thus it has a unique highest weight which will be denoted by $\beta$. According to Corollary 3.5 of Vogan \cite{V80}, the $\frk$-types in any infinite-dimensional $(\frg, K(\bbR))$-module must be the  union of \emph{pencils}, which are of the forms
\begin{equation}\label{pencil}
P(\mu):=\{\mu+n\beta|  n\in \bbZ_{\geq 0}\}.
\end{equation}
These objects are illustrated for the $G_{2(2)}$ case in Figure 1 below, where
the shaded region is $R(\Delta(\frp, \frt_f))\cap C$, while the dotted  circles stand for u-small
$\frk$-types. Note that $2\rho_n$, $2\rho_n^{(1)}$ and $2\rho_n^{(2)}$ there are extremal points of the u-small convex hull.

\begin{figure}[H]
\centering
\scalebox{0.9}{\includegraphics{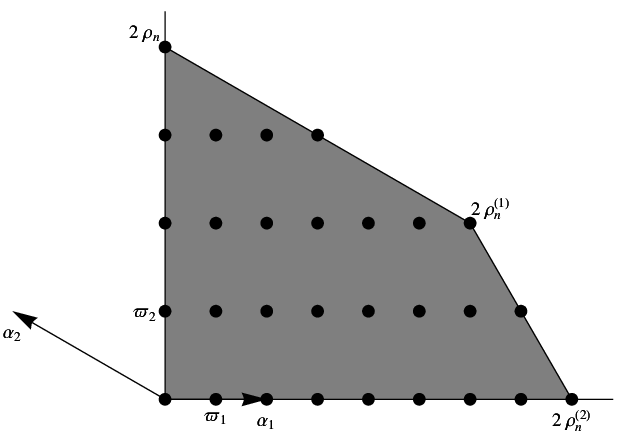}}
\caption{The $G_{2(2)}$ case, where $\beta=3\varpi_1+\varpi_2$.}
\label{Fig-G-USmall}
\end{figure}

\medskip
\noindent\textbf{Theorem C.} \,
\emph{Let $\frg_0$ be on the following list
$$
\frs\frl (2n, \bbR), \, n\geq 2; \quad \frs\frl (2n+1, \bbR); \quad  \frs\frl (n, \bbH), \, n\geq 2;
$$
$$
{\rm E_{6(6)}, \, E_{6(2)}, \, E_{6(-26)}, \, E_{7(7)}, \, E_{7(-5)}, \, E_{8(8)}, \, E_{8(-24)}, \, F_{4(4)}, \, F_{4(-20)}, \, G_{2(2)}}.
$$
The
spin norm increases strictly along any pencil once it goes beyond
the u-small convex hull. Namely,  for any u-large weight $\mu$ such
that $\mu-\beta$ is dominant, we have
\begin{equation}\label{pencil-reduction-step}
\| \mu \|_{\mathrm{spin}} > \|\mu-\beta \|_{\mathrm{spin}}.
\end{equation}}
\medskip

The requirement that  $\mu$  is u-large is key for
\eqref{pencil-reduction-step} to hold. For example, in the $G_{2(2)}$ case, \eqref{pencil-reduction-step} fails for the u-small $\frk$-type $\mu=\beta$. Indeed, in that case, we have $$\|\beta\|_{\rm spin}=\|\rho_c\|<\|0\|_{\rm spin}=\|\rho\|.$$ Moreover, we note  that the unitary dual is known when $\frg_0$ is type A or $G_{2(2)}$, see \cite{Vog86} and \cite{Vog94}.

 Earlier, Theorem 1.1 of \cite{D16}---the counterpart of Theorem C for complex Lie groups---turned out to be very effective in controlling the infinitesimal characters for unitary representations. For instance, in our determination of all the equivalence classes of irreducible unitary representations  with nonvanishing Dirac cohomology for complex $E_6$ \cite{D17}, applying Theorem 1.1 of \cite{D16} has reduced the number of candidate representations in an $s$-family from $124048$ to $3$, see Example 4.1 of \cite{D17}.  Therefore, we expect that Theorem C will improve the efficiency of detecting non-unitarity of irreducible representations of the concerned real reductive Lie groups.

We remark that for other classical real Lie algebras which are not Hermitian symmetric, our method in Section \ref{sec-classical-Lie-algebras} met difficulties. Indeed, we carefully investigated $\frs\frp(p, q)$ with $q\geq p\geq 1$---which should be the easiest case among the remaining ones---but in vain. In this case, the u-small convex hull becomes very complicated, and we did not find an effective way to handle the great many inequalities defining it.

\textbf{When $\frk$ has center}, the $K(\bbR)$-types of an infinite-dimensional irreducible $(\frg, K(\bbR))$-module may not be the union of pencils. Thus there is no motivation to deduce Theorem C in this case. However, we will directly handle the ${\rm Sp}(4, \bbR)$ case in Section \ref{sec-Sp4R}. In particular, it suggests that a neat analogue of Theorem C is not obvious.

The paper is organized as follows. We collect some preliminaries in Section 2, and prove Theorem A in Section 3.
We illustrate our strategy for Theorem C in Section 4. Then we handle the classical Lie algebras for Theorem C in Section 5, and deal with the exceptional ones in Section 6. Section \ref{sec-Sp4R} studies ${\rm Sp}(4, \bbR)$. Finally, we give an algorithm for computing some scattered members in $\widehat{G(\bbR)}^d$ and carry it out for $E_{6(-26)}$ in Section 8.

 Throughout this paper, all the data about root systems are adopted as in Appendix C of Knapp \cite{Kn}.

\emph{Acknowledgements.} The author thanks his thesis adviser Prof.~Huang sincerely for sharing brilliant ideas with him during his PhD study. For instance, Huang suggested the author to pay attention to the u-small convex hull in 2010.  Theorem A grew out of the very helpful lectures given on the 2017 \texttt{atlas} workshop, which was held at University of Utah, July 10--21. We express our sincere gratitude to the \texttt{atlas} mathematicians for their generous support.  We also thank the math department of MIT for offering excellent working conditions. Finally, I am deeply grateful to an anonymous referee for his/her eagle eyes and great patience.

\section{Preliminaries}
This section aims to collect some preliminaries.
\subsection{$\texttt{atlas}$ height and lambda norm}\label{sec-lambda-spin}
We adopt the basic notation $G$, $G(\bbR)$, $\theta$, etc.,  as in the introduction.
In particular, a non-degenerate invariant symmetric bilinear form $B(\cdot, \cdot)$ has been fixed on $\frg$.

We denote by $\Delta(\frg, \frh_f)$ (resp.,  $\Delta(\frg, \frt_f)$) the root system of $\frg$ with respect to $\frh_f$ (resp., $\frt_f$).
The root system of $\frk$ with respect to $\frt_f$ is denoted by $\Delta(\frk, \frt_f)$.
Note that $\Delta(\frg,
\frh_f)$ and $\Delta(\frk, \frt_f)$ are reduced, while $\Delta(\frg, \frt_f)$ is not reduced in general. The Weyl groups for these root systems will be denoted by $W(\frg, \frh_f)$, $W(\frg, \frt_f)$ and $W(\frk, \frt_f)$.

We fix compatible choices of positive roots $\Delta^+(\frg, \frh_f)$ and $\Delta^+(\frg, \frt_f)$ so that a positive root in $\Delta(\frg, \frh_f)$ restricts to a positive root in $\Delta(\frg, \frt_f)$. Recall the decomposition \eqref{restricted-pos-roots}.
Denote by $\rho_n$ the half sum of roots in $\Delta^+(\frp, \frt_f)$. Then $\rho_n=\rho-\rho_c$, and they all live in $i\frt_{f, 0}^*$.

Let us simply refer to a $\frk$-type by its highest weight $\mu$. Choose a positive root system $(\Delta^+)^\prime(\frg, \frh_f)$ making $\mu+2\rho_c$ dominant. Let $\rho^\prime$ be the half sum of roots in $(\Delta^+)^\prime(\frg, \frh_f)$. After \cite{SV}, we put $\lambda_a(\mu)$ as the projection of $\mu+2\rho_c- \rho^\prime$ to the dominant Weyl chamber of $(\Delta^+)^\prime(\frg, \frh_f)$. Then
\begin{equation}\label{spin-norm}
\|\mu\|_{\rm{lambda}}:=\|\lambda_a(\mu)\|.
\end{equation}
Here $\|\cdot\|$ is the norm on $i\frt_{f, 0}^*$ induced from the form $B(\cdot, \cdot)$. It turns out that this number  is  independent of the choice of  $(\Delta^+)^\prime(\frg, \frh_f)$, and it is the \emph{lambda norm} of the $\frk$-type $\mu$ \cite{Vog81}. The \texttt{atlas} \emph{height} of $\mu$ is defined as
\begin{equation}\label{atlas-height}
\sum_{\alpha\in (\Delta^+)^\prime(\frg, \frh_f)}\langle \mu,  \alpha^{\vee}\rangle.
\end{equation}


\subsection{Dirac cohomology}\label{sec-Dirac}

Fix an orthonormal basis $Z_1, \cdots, Z_n$ of $\frp_0$ with respect to
the inner product induced by the form $B(\cdot, \cdot)$. Let $U(\frg)$ be the
universal enveloping algebra of $\frg$ and let $C(\frp)$ be the
Clifford algebra of $\frp$ with respect to $B$. The Dirac operator
$D\in U(\frg)\otimes C(\frp)$ is defined as
$$D=\sum_{i=1}^{n}\, Z_i \otimes Z_i.$$
It is easy to check that $D$ does not depend on the choice of the
orthonormal basis $Z_i$ and it is $K(\bbR)$-invariant for the diagonal
action of $K(\bbR)$ given by adjoint actions on both factors.

To understand the unitary dual $\widehat{G(\bbR)}$ better, Vogan introduced the notion of Dirac cohomology in 1997 \cite{Vog97}.
Let $\widetilde{K(\bbR)}$ be  the subgroup of $K(\bbR)\times \text{Pin}\,\frp_0$ consisting of all pairs $(k, s)$ such that $\text{Ad}(k)=p(s)$, where $\text{Ad}: K(\bbR)\rightarrow \text{O}(\frp_0)$ is the adjoint action, and $p: \text{Pin}\,\frp_0\rightarrow \text{O}(\frp_0)$ is the pin double covering map. Namely, $\widetilde{K(\bbR)}$ is constructed from the following diagram:
\[
\begin{CD}
\widetilde{K(\bbR)} @>  >  > {\rm Pin}\, \frp_0 \\
@VVV  @VVpV \\
K(\bbR) @>{\rm Ad}>> O(\frp_0)
\end{CD}
\]
Let $S_G$ be a spin module for
$C(\frp)$, then $S_G$ is a $\widetilde{K(\bbR)}$ module.
Let $\pi$ be a
($\frg$, $K(\bbR)$)-module.
Then $D\in U(\frg)\otimes C(\frp)$ acts on $\pi\otimes S_G$, and the Dirac
cohomology of $\pi$ is defined as the $\widetilde{K(\bbR)}$-module
\begin{equation}\label{def-Dirac-cohomology}
H_D(\pi)=\text{Ker}\, D/ (\text{Im} \, D \cap \text{Ker} D).
\end{equation}
Here we note that $\widetilde{K(\bbR)}$ acts  on $\pi$
through $K(\bbR)$ and on $S_G$ through the pin group
$\text{Pin}\,{\frp_0}$. Moreover, since ${\rm Ad}(k) (Z_1), \dots, {\rm Ad}(k) (Z_n)$ is still an orthonormal basis of $\frp_0$, it follows that $D$ is $\widetilde{K(\bbR)}$ invariant. Therefore, ${\rm  Ker} D$, ${\rm Im} D$, and $H_D(X)$ are once again $\widetilde{K(\bbR)}$ modules.

By setting  the linear functionals on $\frt_f$ to be zero on $\fra_f$, we embed $\frt_f^{*}$ as a subspace of $\frh_f^{*}$. The  Vogan conjecture  was proved by Huang and Pand\v zi\'c in Theorem 2.3 of \cite{HP}. Let us recall a slight extension of this result to possibly disconnected Lie groups as follows.

\begin{thm}{\rm (Theorem A \cite{DH})}\label{thm-HP}
Let $\pi$ be an irreducible ($\frg$, $K(\bbR)$)-module.
Assume that the Dirac
cohomology of $\pi$ is nonzero, and let $\gamma\in\frt_f^{*}\subset\frh_f^{*}$ be any highest weight of any $\widetilde{K(\bbR)}$-type  in $H_D(X)$. Then the infinitesimal character $\Lambda$ of $\pi$ is conjugate to
$\gamma+\rho_{c}$ under $W(\frg,\frh_f)$.
\end{thm}

Guaranteed by the above theorem, a necessary condition for $\pi$ to have non-zero Dirac cohomology is that it has real infinitesimal character (cf. Definition 5.4.11 of \cite{Vog81}).


\subsection{Cohomological induction}
Fix a non-zero element $H\in i\frt_{f, 0}$, then  a $\theta$-stable parabolic subalgebra $\frq= \frl\oplus\fru$ of $\frg$ can be defined as the sum of nonnegative eigenspaces of $\mathrm{ad}(H)$. Here the Levi subalgebra $\frl$ of
$\frq$ is the zero eigenspace of $\mathrm{ad}(H)$, while the
nilradical $\fru$ of $\frq$ is the sum of positive eigenspaces of
$\mathrm{ad}(H)$.  Then it follows from $\theta(H)=H$ that $\frl$, $\fru$ and $\frq$ are all
$\theta$-stable. Set $L(\bbR)=N_{G(\bbR)}(\frq)$.

Let us arrange the positive root systems in a compatible way, that
is, $\Delta(\fru, \frh_f)\subseteq \Delta^{+}(\frg,\frh_f)$ and
set $\Delta^{+}(\frl, \frh_f)=\Delta(\frl,
\frh_f)\cap \Delta^{+}(\frg,\frh_f)$. Let
$\rho^{L}$  denote the half sum of roots in
$\Delta^{+}(\frl,\frh_f)$,  and denote by $\rho(\fru)$ (resp., $\rho(\fru\cap\frp)$) the half sum of roots in $\Delta(\fru,\frh_f)$ (resp., $\Delta(\fru\cap\frp,\frh_f)$). Then
\begin{equation}\label{relations}
\rho=\rho^{L}+\rho(\fru).
\end{equation}

Let $Z$ be an ($\frl$, $L(\bbR)\cap K(\bbR)$)-module. Cohomological induction functors (or Zuckermann functors) attach to $Z$ certain ($\frg, K(\bbR)$)-modules $\caL_j(Z)$ and $\caR^j(Z)$, where $j$ is a nonnegative integer. For a definition, see Chapter 2 of \cite{KV}.
Suppose that $\lambda_L\in \frh_f^*$ is the infinitesimal character of $Z$. We say
$Z$ or $\lambda_L$ is {\it good} or {\it in good range} if
\begin{equation}\label{def-good}
\mathrm{Re}\langle \lambda_L +\rho(\fru),\, \alpha \rangle >
0, \quad \forall \alpha\in \Delta(\fru, \frh_f).
 \end{equation} We say $Z$ or
$\lambda$ is {\it weakly good} if
\begin{equation}\label{def-weakly-good}
\mathrm{Re}\langle \lambda_L +\rho(\fru),\, \alpha \rangle
\geq 0, \quad \forall \alpha\in \Delta(\fru, \frh_f).
\end{equation}

Let us recall a theorem which is mainly due to Vogan.

\begin{thm}\label{thm-Vogan-coho-ind}
{\rm (\cite{Vog84} Theorems 1.2 and 1.3, or \cite{KV} Theorems 0.50 and 0.51)}
Suppose the admissible
 ($\frl$, $L(\bbR)\cap K(\bbR)$)-module $Z$ is weakly good.  Then we have
\begin{itemize}
\item[(i)] $\caL_j(Z)=\caR^j(Z)=0$ for $j\neq S(:=\emph{\text{dim}}\,(\fru\cap\frk))$.
\item[(ii)] $\caL_S(Z)\cong\caR^S(Z)$ as ($\frg$, $K(\bbR)$)-modules.
\item[(iii)]  if $Z$ is irreducible, then $\caL_S(Z)$ is either zero or an
irreducible ($\frg$, $K(\bbR)$)-module with infinitesimal character $\lambda_L+\rho(\fru)$.
\item[(iv)]
if $Z$ is unitary, then $\caL_S(Z)$, if nonzero, is a unitary ($\frg$, $K(\bbR)$)-module.
\item[(v)] if $Z$ is in good range, then $\caL_S(Z)$ is nonzero, and it is unitary if and only if $Z$ is unitary.
\end{itemize}
\end{thm}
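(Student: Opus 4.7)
The plan is to prove the five parts in order, leaning on the general machinery of Zuckerman/Bernstein functors developed in Chapters 2--5 of \cite{KV}.

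For (i), the functors $\caL_j$ and $\caR^j$ are built from projective/injective resolutions of length $S=\dim(\fru\cap\frk)$, so they automatically vanish outside the range $[0,2S]$, and by the self-duality of these functors outside $[0,S]$. The vanishing below degree $S$ is where the weakly good hypothesis enters: I would compute the Euler--Poincar\'e sum $\sum_j (-1)^j [\caL_j(Z)]$ in the Grothendieck group of Harish-Chandra modules and compare $K(\bbR)$-characters via the Blattner-type formula for cohomological induction. The weakly good condition guarantees that no cancellation between different degrees can occur among the bottom-layer $K(\bbR)$-types, forcing concentration in degree $S$. For (ii), I would invoke the Hard Duality Theorem (Enright--Wallach; Theorem 3.1 of \cite{KV}), which identifies $\caL_j(Z)$ with $\caR^{2S-j}(Z)$ up to a one-dimensional twist by the top exterior power of $\fru\cap\frp$; combined with (i), this yields $\caL_S(Z)\cong \caR^S(Z)$ once the twist is absorbed into the $\rho$-normalization of $\caR$.

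For (iii), the infinitesimal character $\lambda_L+\rho(\fru)$ is forced by tracing the action of the center of $U(\frg)$ through the Harish-Chandra isomorphism and the built-in $\rho(\fru)$-shift of cohomological induction. Irreducibility I would establish by a bottom-layer argument: identify the distinguished minimal $K(\bbR)$-type of $\caL_S(Z)$, show it occurs with multiplicity one and generates the whole module, then conclude that any nonzero subquotient must coincide with $\caL_S(Z)$. The weakly good hypothesis is precisely what makes the Blattner-type multiplicity formula clean enough for this argument to run.

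Parts (iv) and (v) are the heart of the matter. I would transport the invariant Hermitian form on $Z$ to an invariant form on $\caL_S(Z)$ via a Shapovalov-type pairing on the defining resolution. The weakly good hypothesis ensures the induced form is positive semidefinite; the strict good condition in (v) further makes it nondegenerate on the bottom-layer $K(\bbR)$-type, yielding both nonvanishing of $\caL_S(Z)$ and the ``if'' direction of the equivalence. For the ``only if'' direction of (v), the bottom-layer $K(\bbR)$-type of $\caL_S(Z)$ carries a form which, under the good-range assumption, restricts faithfully to the form on $Z$, so unitarity of $\caL_S(Z)$ forces unitarity of $Z$. The main obstacle throughout is (iv) in the \emph{weakly} good (non-strict) range: one cannot argue by continuous deformation from the strict case, and controlling the potential kernel of the induced Hermitian form on the boundary requires the full Signature Theorem for cohomological induction to preclude a sign change.
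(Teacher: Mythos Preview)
The paper does not prove this theorem at all: it is stated with attribution to \cite{Vog84} (Theorems 1.2 and 1.3) and \cite{KV} (Theorems 0.50 and 0.51) and then used as a black box. There is thus no ``paper's own proof'' against which to compare your proposal; your sketch is really an outline of the argument one finds in those references, not something the present paper undertakes.

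That said, a couple of points in your sketch are imprecise. In (ii), Hard Duality in the normalization of \cite{KV} identifies $\caL_j$ with $\caR^{\,2S-j}$ only if you are counting with the full Zuckerman functor range $[0,2S]$ built on $\dim(\frk/\frl\cap\frk)$; in the $\caL$/$\caR$ normalization used for cohomological induction the shift that matters is $S$, and getting the bookkeeping of the twist and the degree right is exactly the content of \cite[Corollary 5.72]{KV}, not Theorem~3.1 alone. More substantively, your plan for (i) via an Euler--Poincar\'e character argument on bottom-layer $K(\bbR)$-types does not by itself force vanishing in the individual degrees $j<S$: a character identity in the Grothendieck group cannot rule out mutually cancelling contributions unless you have independent control on each degree. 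In \cite{KV} the vanishing below $S$ is obtained instead from the $\fru$-homology/Kostant-type spectral sequence together with the weakly good positivity (see \cite[Theorems~5.35 and~5.99]{KV}), not from a bottom-layer cancellation argument. Your outline for (iv) correctly identifies the hard step and the role of the Signature Theorem, but you should be aware that this is a deep result whose proof occupies a substantial part of \cite{KV} and is not something one can sketch in a paragraph.
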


Let $\pi$ be an irreducible $(\frg, K(\bbR))$-module with \emph{real} infinitesimal character $\Lambda\in\frh_f^*$ which is dominant for $\Delta^+(\frg, \frh_f)$.
After \cite{Sa}, we say that $\pi$ is {\it strongly regular} if
\begin{equation}\label{def-weakly-good}
\langle \Lambda-\rho,\, \alpha^\vee \rangle
\geq 0, \quad \forall \alpha\in \Delta^+(\frg, \frh_f).
\end{equation}

\begin{thm}\label{thm-SR} \emph{(Salamanca-Riba \cite{Sa})}
Let $\pi$ be a strongly regular irreducible $(\frg, K(\bbR))$-module. Then $\pi$ is unitary if and only if it is a $A_{\frq}(\lambda)$ module in the good range.
\end{thm}

Recall that Dirac cohomology of  cohomologically induced modules has been determined whenever the inducing modules are weakly good \cite{DH}.

\begin{thm}{\rm (Theorem B of \cite{DH})}\label{thm-DH}
 Suppose that the irreducible unitary $(\frl, L(\bbR)\cap K(\bbR))$-module $Z$ has a real infinitesimal character $\lambda_L\in i\frt_{f, 0}^*$ which is weakly good. Then there is a $\widetilde{K(\bbR)}$-module isomorphism
\begin{equation}\label{Dirac-coho}
H_D(\caL_S(Z)) \cong  \caL_S^{\widetilde{K(\bbR)}} (H_D(Z)\otimes \bbC_{-\rho(\fru\cap\frp)}).
\end{equation}
\end{thm}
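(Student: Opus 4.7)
The plan is to exploit the $\frq$-adapted decomposition $\frp = (\frl\cap\frp) \oplus (\fru\cap\frp) \oplus (\bar\fru\cap\frp)$, where the last two summands are mutually dual maximal isotropic subspaces under $\langle\cdot,\cdot\rangle$. This yields a factorization of the spin module $S_G \cong S_L \otimes \wedge^\bullet(\fru\cap\frp)$ as a $\widetilde{L(\bbR)\cap K(\bbR)}$-module, where $S_L$ is a spin module for $C(\frl\cap\frp)$. The exterior algebra factor carries an intrinsic highest-weight shift by $\rho(\fru\cap\frp)$, which is precisely what accounts for the twist by $\bbC_{-\rho(\fru\cap\frp)}$ on the right-hand side. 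Correspondingly, I would split the Dirac operator as $D = D_L \otimes 1 + D(\fru\cap\frp)$, with $D_L$ acting in the $\frl$-direction and $D(\fru\cap\frp)$ built from Kostant's cubic Dirac element for the pair $(\frg, \frl)$; the two pieces graded-commute in a sense that allows $H_D$ to be organized as a double cohomology.

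Next, the weakly good hypothesis is brought in through Theorem \ref{thm-Vogan-coho-ind}: it guarantees that $\caL_S(Z)$ is either zero or irreducible with infinitesimal character $\lambda_L + \rho(\fru)$, and that all higher derived functors vanish. The reality assumption $\lambda_L \in i\frt_{f, 0}^*$ is compatible with Theorem \ref{thm-HP}: any $\widetilde{K(\bbR)}$-type $E_\gamma$ contributing to the left-hand side must satisfy $\gamma + \rho_c \sim \lambda_L + \rho(\fru)$ under $W(\frg, \frh_f)$, and the same must hold for every weight appearing on the right-hand side after passing through $\caL_S^{\widetilde{K(\bbR)}}$. This matching of infinitesimal character is the first nontrivial consistency check.

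The heart of the argument is the bridge between Dirac cohomology and $\fru$-cohomology. By a Casselman-Osborne / Hecht-Schmid-type computation, under the weakly good hypothesis $H^\bullet(\fru, \caL_S(Z))$ is concentrated in a single degree and is given by a specific $\rho$-shift of $Z$. Combining this with the Huang-Pand\v zi\'c-Kostant identification of $H_D$ with a twist of $\fru$-cohomology (via the wedge factor above), and tracking the $\rho(\fru\cap\frp)$-shifts carefully, yields the stated $\widetilde{K(\bbR)}$-module isomorphism. One can then check that the operation $\caL_S^{\widetilde{K(\bbR)}}$ naturally encodes the two shifts and the relevant induction from $\widetilde{L(\bbR)\cap K(\bbR)}$ to $\widetilde{K(\bbR)}$.

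The main obstacle I expect lies in the careful bookkeeping of all $\rho$-shifts and of the $\widetilde{K(\bbR)}$- versus $K(\bbR)$-actions, since the pin double cover introduces subtle sign issues that must be reconciled when comparing spin modules on both sides of the induction. A secondary difficulty is ensuring degeneracy of the relevant spectral sequence in the boundary case of equality in \eqref{def-weakly-good}: the strictly good range is cleaner, while the weakly good case typically requires a separate argument using unitarity of $Z$ (to rule out cancellations) or a Jantzen-type filtration.
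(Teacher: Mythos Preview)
The paper does not contain a proof of this statement: it is quoted verbatim as Theorem~B of \cite{DH} and used as a black box, with no argument given here. So there is no ``paper's own proof'' to compare against; your proposal is a sketch of how the cited external result might be established, not of anything appearing in the present paper.

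That said, your outline is broadly in line with the approach one expects in \cite{DH} and its predecessor \cite{HKP}: the factorization $S_G \cong S_L \otimes \wedge^\bullet(\fru\cap\frp)$, the decomposition of $D$ involving Kostant's cubic Dirac operator for $(\frg,\frl)$, and the passage through $\fru$-cohomology are indeed the standard ingredients. One caution: your description of the two pieces of $D$ as ``graded-commuting'' and of $H_D$ as a ``double cohomology'' is heuristic; the actual argument requires Kostant's formula $D^2 = D_L^2 + D_{\frg,\frl}^2 + \text{(scalar)}$ together with positivity coming from unitarity, rather than a formal bicomplex degeneration. Your remark about the weakly good boundary case needing unitarity to prevent cancellation is correct in spirit, but the mechanism in \cite{DH} is not a Jantzen filtration; it is the fact that on a unitary module $H_D = \ker D = \ker D^2$, which turns the Dirac cohomology computation into a kernel computation amenable to the square decomposition above.
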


\subsection{Cohomological induction in \texttt{atlas}}

Let us recall necessary notation from \cite{ALTV} regarding the Langlands parameters in the software \texttt{atlas} \cite{At}. Let $H$ be a \emph{maximal torus} of $G$. That is, $H$ is a maximal connected abelian subgroup of $G$ consisting of diagonalizable matrices. Note that $H$ is complex connected reductive algebraic. Its\emph{ character lattice} is the group of algebraic homomorphisms
$$
X^*:={\rm Hom}_{\rm alg} (H, \bbC^{\times}).
$$
Choose a Borel subgroup $B\supset H$.
In \texttt{atlas}, an irreducible $(\frg, K(\bbR))$-module $\pi$ is parameterized by a \emph{final} parameter $p=(x, \lambda, \nu)$ via the Langlands classification, where $x$ is a $K$-orbit of the Borel variety $G/B$, $\lambda \in X^*+\rho$ and $\nu \in (X^*)^{-\theta}\otimes_{\bbZ}\bbC$. For the parameter $p$ to be final, there are further requirements on $x$, $\lambda$ and $\nu$. We refer the reader to \cite{ALTV} for a rigorous definition. In such a case, the infinitesimal character of $\pi$ is
\begin{equation}\label{inf-char}
\frac{1}{2}(1+\theta)\lambda +\nu \in\frh^*.
\end{equation}
Note that the Cartan involution $\theta$ now becomes $\theta_x$---the involution of $x$, which is given by the command \texttt{involution(x)} in \texttt{atlas}.

The following result is taken from Paul's lecture \cite{Paul}. It tells us how  to do cohomological induction in \texttt{atlas}.

\begin{thm}\label{thm-Vogan} \emph{(Vogan \cite{Vog84})}
 Let $p=(x, \lambda, \nu)$ be the \texttt{atlas} parameter of an irreducible $(\frg, K(\bbR))$-module $\pi$.
Let $S$ be the support of $x$, and $\frq(x)$ be the $\theta$-stable parabolic subalgebra given by the pair $(S, x)$, with Levi factor $L(\bbR)$.
Then  $\pi$ is cohomologically induced, in the weakly good range,
from an irreducible $(\frl, L(\bbR)\cap K(\bbR))$-module $\pi_L$ with parameter $p_L$.
\end{thm}

Note that the support of a KGB element $x$ is given by the command \texttt{support(x)} in \texttt{atlas}. The  parameter  $p_L=(y, \lambda^\prime, \nu^\prime)$ can be easily obtained as follows:
$y$ is the KGB element of $L(\bbR)$ corresponding to the KGB element $x$ of $G(\bbR)$, $\lambda^\prime=\lambda-\rho(\fru)$ and $\nu^\prime=\nu$.

\begin{example}\label{exam-coho-ind}
Let us illustrate Theorem \ref{thm-Vogan} via a representation of the  group $\texttt{F4\_B4}$ in \texttt{atlas}, whose Lie algebra is ${\rm FI}=F_{4(4)}$ (see Section \ref{sec-FI}). To save space, certain outputs have been omitted here.
\begin{verbatim}
atlas> G:F4_B4
atlas> set p=parameter(KGB(G,1), [1,1,0,1], [0,0,0,0])
atlas> set (P, pL)=reduce_good_range(p)
atlas> P
Value: ([],KGB element #1)
atlas> pL
Value: final parameter(x=0,lambda=[0,0,-1,0]/1,nu=[0,0,0,0]/1)
atlas> rho_u(P)
Value: [ 1, 1, 1, 1 ]/1
atlas> theta_induce_irreducible(pL, G)=p
Value: true
atlas> goodness(pL,G)
Value: "Weakly good"
\end{verbatim}
The \texttt{pL} above is the parameter of the inducing module. The last output says that it is weakly good.\hfill\qed
\end{example}

%


\section{Proof of Theorem A}

In this section, we continue to let $G$, $\sigma$, $\theta$, etc., be as in the introduction.
We fix a maximal torus $H$ of $G$ which is \emph{defined over} $\bbR$ with respect to $\sigma$.  That is, $\sigma(H)=H$.
Whenever a Borel subgroup $B\supset H$ is fixed,  we will have a set of positive roots $\Delta^+(\frg, \frh)$. Denote by $\rho(G)$ the half sum of roots in $\Delta^+(\frg, \frh)$. Of course $\|\rho(G)\|$ is independent of the choice of $B$. Now let us prepare two propositions.

\begin{prop}\label{prop-nu-bound}
Let $\pi$ be an irreducible unitary $(\frg, K(\bbR))$-module with \texttt{atlas} parameter $(x, \lambda, \nu)$. Assume that $\pi$ has real infinitesimal character. Then \begin{equation}\label{nu-bound}
\|\nu\| \leq \|\rho(G)\|.
\end{equation}
\end{prop}

The spherical case of the above proposition was due to Helgason and Johnson \cite{HJ}. In general, one can refer to Theorem 5.2 of Chapter IV of Borel and Wallach \cite{BW}.
Put
\begin{equation}\label{N}
N:=\max_{\alpha\in\Delta^+(\frg, \frh)}\frac{2\|\rho(G)\|}{\|\alpha\|}.
\end{equation}

\begin{prop}\label{prop-lambda-bound}
Let $\pi$ be any irreducible unitary $(\frg, K(\bbR))$-module with real infinitesimal character. Let $(x, \lambda, \nu)$ be the \texttt{atlas} parameter of $\pi$.  Then either  $\|\frac{\lambda+\theta\lambda}{2}\|$ is upper bounded by $N \|\rho(G)\|$, or $\pi$ is cohomologically induced from an irreducible unitary Harish-Chandra module $\pi_{L(\bbR)}$ of $L(\bbR)$ which is in the good range. Here $L(\bbR)$ is a proper $\theta$-stable parabolic subgroup of $G(\bbR)$.
\end{prop}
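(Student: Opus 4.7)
The plan is to argue by contrapositive: I show that whenever $\|\tfrac12(\lambda+\theta\lambda)\|$ exceeds a threshold depending only on $\|\rho(G)\|$ and the root system $\Delta(\frg,\frh)$, $\pi$ is realized as $\caL_S(\pi_{L(\bbR)})$ for some proper $\theta$-stable Levi $L(\bbR)\subset G(\bbR)$ and irreducible unitary $\pi_{L(\bbR)}$ in the good range. The starting point is the decomposition $\Lambda=\Lambda_t+\nu$ of the (real) infinitesimal character, where $\Lambda_t:=\tfrac12(1+\theta)\lambda\in i\frt_{f,0}^*$ is the $\theta$-fixed ``compact'' part and $\nu\in\fra^*$. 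By Proposition~\ref{prop-nu-bound}, $\nu$ is confined to the bounded region $\|\nu\|\le\|\rho(G)\|$, so only $\Lambda_t$ can be large.

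Next I manufacture the parabolic. Let $H\in i\frt_{f,0}$ be the element dual to $\Lambda_t$ via $\langle\cdot,\cdot\rangle$, and form the $\theta$-stable parabolic $\frq=\frl\oplus\fru$ as the sum of non-negative $\mathrm{ad}(H)$-eigenspaces, exactly as in the Introduction. Because $\frg$ is simple, any nonzero $H$ is non-central, so $\frq$ is proper. For the good range condition $\mathrm{Re}\langle\Lambda,\alpha\rangle>0$ on every $\alpha\in\Delta(\fru,\frh_f)$, observe that $\langle\Lambda_t,\alpha\rangle>0$ strictly by the very construction of $\fru$. Since there are only finitely many combinatorial types of $\frq$ arising this way, a uniform constant $c>0$ exists with $\langle\Lambda_t,\alpha\rangle\ge c\|\Lambda_t\|$ for every such $\alpha$. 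Combining with $|\langle\nu,\alpha\rangle|\le\|\rho(G)\|\cdot\max_\alpha\|\alpha\|$ forces good range as soon as $\|\Lambda_t\|$ exceeds a fixed uniform constant.

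With good range secured, I appeal to Vogan's classification theorem in the good range (see Chapter~11 of \cite{KV}), which identifies irreducible $(\frg,K(\bbR))$-modules of infinitesimal character $\Lambda$ with irreducible $(\frl,L(\bbR)\cap K(\bbR))$-modules of infinitesimal character $\lambda_L:=\Lambda-\rho(\fru)$ via $\caL_S$. This yields an irreducible $\pi_{L(\bbR)}$ with $\caL_S(\pi_{L(\bbR)})\cong\pi$, and Theorem~\ref{thm-Vogan-coho-ind}(v) immediately upgrades this to unitarity. The main obstacle I anticipate is calibrating the surjectivity direction of that classification: one either appeals directly to the Zuckerman-functor bijection, or works \texttt{atlas}-theoretically by checking that the parameter $(x,\lambda,\nu)$ factors through a standard Levi parameter on $L(\bbR)$ once $\Lambda_t$ is large enough in its chosen direction --- a step requiring some bookkeeping but ultimately a finite-combinatorial verification.
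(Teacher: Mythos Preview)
Your construction of the parabolic contains a genuine gap. The claim that a uniform constant $c>0$ exists with $\langle\Lambda_t,\alpha\rangle\ge c\,\|\Lambda_t\|$ for every $\alpha\in\Delta(\fru,\frh_f)$ is false, and the finiteness of ``combinatorial types of $\frq$'' does not rescue it. Concretely, in type $A_2$ take $\Lambda_t=M\varpi_1+\varpi_2$ with $M$ large. Then the dual element $H$ is regular, so $\frl=\frh$ and both simple roots lie in $\Delta(\fru)$; but $\langle\Lambda_t,\alpha_2^{\vee}\rangle=1$ while $\|\Lambda_t\|\sim M$, so the ratio tends to~$0$. If $|\langle\nu,\alpha_2^{\vee}\rangle|>1$ (which the bound $\|\nu\|\le\|\rho(G)\|$ certainly allows), the good-range inequality $\langle\Lambda_t+\nu,\alpha_2^{\vee}\rangle>0$ fails for this $\alpha_2\in\Delta(\fru)$, no matter how large $\|\Lambda_t\|$ is. The underlying point is that $\Lambda_t$ can be very long while remaining arbitrarily close in angle to a wall, so roots transverse to that direction see only a bounded pairing.

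The paper's remedy is precisely to absorb such small-pairing roots into the Levi rather than the nilradical. After choosing a Borel making $\Lambda_t$ weakly dominant, one fixes the threshold $N$ with $|\langle\nu,\alpha^{\vee}\rangle|\le N$ for all roots (uniform by Proposition~\ref{prop-nu-bound}) and lets $\frl$ be generated by those positive roots $\beta$ with $\langle\Lambda_t,\beta^{\vee}\rangle\le N$. Then every $\gamma\in\Delta(\fru)$ automatically satisfies $\langle\Lambda_t,\gamma^{\vee}\rangle>N\ge|\langle\nu,\gamma^{\vee}\rangle|$, which is exactly the good-range inequality; and $\frl$ is proper once $\|\Lambda_t\|$ is large because some \emph{simple} root must then have pairing exceeding~$N$. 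Your final appeal to the good-range bijection and Theorem~\ref{thm-Vogan-coho-ind}(v) is fine; only the definition of $\frq$ needs this repair.
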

\begin{proof}
It suffices to show that whenever $\|\frac{\lambda+\theta\lambda}{2}\|>N \|\rho(G)\|$, then
$\pi$ must be cohomologically induced from some irreducible unitary  Harish-Chandra module $\pi_{L(\bbR)}$ of $L(\bbR)$ which is  in the
good range. Here $L(\bbR)$ is certain proper $\theta$-stable Levi subgroup of $G(\bbR)$.

Since $\|\nu\|\leq \|\rho(G)\|$ by Proposition \ref{prop-nu-bound}, we have that
\begin{equation}\label{root-for-nu}
|\langle \nu, \alpha^\vee\rangle | \leq  N
\end{equation}
for every root $\alpha$. Choose a Borel subgroup $B$ of $G$ making $\frac{\lambda+\theta\lambda}{2}$ weakly dominant. That is,
$$
\langle \frac{1}{2}(\lambda+\theta\lambda), \beta^\vee\rangle \geq 0
$$
for every positive root $\beta$. Let $\{\zeta_1, \dots, \zeta_l\}$ be the fundamental weights for $\Delta^+(\frg, \frh)$.
Then
$$
\frac{\lambda+\theta\lambda}{2}=\lambda_1\zeta_1 + \cdots + \lambda_l\zeta_l,
$$
where $\lambda_i\geq 0$.

Let $\frl$ be the $\theta$-stable Levi subalgebra of $\frg$ \emph{generated} by all those
positive roots $\beta$ such that
\begin{equation}\label{beta-for-lambda}
\langle
\frac{1}{2}(\lambda+\theta\lambda), \beta^\vee\rangle \le N.
\end{equation}
We \emph{claim} that if $\|\frac{\lambda+\theta\lambda}{2}\|> N\|\rho(G)\|$, then $\frl$ must\emph{ proper}. Indeed, in such a case, there must exist $1\leq i_0\leq l$ such that $\lambda_{i_0}>N$.
Let $\beta_{i_0}$ be the corresponding simple root. Then
\begin{equation}
\langle
\frac{1}{2}(\lambda+\theta\lambda), \beta_{i_0}^\vee\rangle=\lambda_{i_0} > N.
\end{equation}
The root $\beta_0$ is distinct from any of the $\beta$ described in \eqref{beta-for-lambda}. Thus the claim holds.

Let $\frq=\frl+\fru$ be the corresponding $\theta$-stable parabolic subalgebra of $\frg$. If $\gamma$ is
any root in $\fru$, then
\begin{equation}
\langle \frac{1}{2}(\lambda+\theta\lambda), \gamma^\vee\rangle > N.
\end{equation}
Thus by \eqref{root-for-nu}, we have
\begin{equation}
\langle \frac{1}{2}(\lambda+\theta\lambda) +\nu,  \gamma^\vee\rangle > 0.
\end{equation}
Therefore cohomological induction from $\pi_{L(\bbR)}$ by $\frq$ to $G(\bbR)$ (in the way analogous to Theorem \ref{thm-Vogan}) is in the good range, and Theorem \ref{thm-Vogan-coho-ind} says that $\pi_{L(\bbR)}$ must be irreducible and unitary.
\end{proof}
\begin{rmk}\label{rmk-prop-lambda-bound}
The above proof  was kindly told to us by Professor Vogan.
\end{rmk}

Now we are ready to prove Theorem A.

\medskip
\noindent \emph{Proof of Theorem A.}\quad   Let $\pi$ be an irreducible unitary $(\frg, K(\bbR))$-module with non-zero Dirac cohomology. Let $(x, \lambda, \nu)$ be an \texttt{atlas} parameter of $\pi$. By Theorem \ref{thm-HP}, the infinitesimal character \eqref{inf-char} of $\pi$ must be real. There are two cases.

The first case is that $\pi$ is cohomologically induced from an irreducible unitary  Harish-Chandra module $\pi_{L(\bbR)}$ of $L(\bbR)$ which is in the good range, where $L(\bbR)$ is a proper $\theta$-stable Levi subgroup of $G(\bbR)$. In this case, Theorem \ref{thm-DH} says that $\pi_{L(\bbR)}$ must be a member of $\widehat{L(\bbR)}^{\mathrm{d}}$ and we are done.

Otherwise, by Proposition \ref{prop-lambda-bound}, we conclude that $\|\frac{\lambda+\theta\lambda}{2}\|\leq N\|\rho(G)\|$. Note that $\|\nu\|\leq \|\rho(G)\|$ since $\pi$ is assumed to be unitary, see Proposition \ref{prop-nu-bound}. Therefore, the infinitesimal character
\begin{equation}\label{inf-char-atlas}
\frac{1}{2}(1+\theta)\lambda+\nu\in\frh^*
\end{equation}
of $\pi$ is upper bounded by
\begin{equation}\label{Lambda-bound}
(N^2+1)\|\rho(G)\|^2=\left(\max_{\alpha\in\Delta^+(\frg, \frh)}\frac{4\|\rho(G)\|^2}{\|\alpha\|^2}+1\right)\|\rho(G)\|^2.
\end{equation}
Let $\Lambda\in\frh_f^*$ be the conjugation of the above element to $\frh_f^*$. Since $H_D(\pi)$ is assumed to be non-zero, Theorem \ref{thm-HP} tells us that
\begin{equation}\label{inf-char-classical}
\Lambda=w(\gamma_G +\rho_c)
\end{equation}
for some $w\in W(\frg, \frh_f)$ and for some $\widetilde{K(\bbR)}$ highest weight $\gamma_G$ of $\pi\otimes S_G$. Note that $\gamma_G+\rho_c$ in \eqref{inf-char-classical} lives in a discrete set, then so do $\Lambda$ in \eqref{inf-char-classical} and the element in \eqref{inf-char-atlas}.
Being bounded and being discrete simultaneously allow us to conclude that the infinitesimal character \eqref{inf-char-atlas} of $\pi$ has finitely many choices. Since there are finitely many irreducible Harish-Chandra modules with a given infinitesimal character, it follows directly that $\pi$ has at most finitely many choices. This handles the remaining cases, and the proof finishes. \hfill\qed

The above proof actually gives us a method to exhaust all the possible infinitesimal characters of the representations coming from the scattered part of $\widehat{G(\bbR)}^{\rm d}$. We will develop this into an explicit algorithm in Section \ref{sec-algorithm}.

\section{Strategy for Theorem C}
This section aims to explain our strategy for Theorem C.
We continue with the setting of the introduction and assume that $\frk$ \textbf{has no center}. Recall the decomposition \eqref{restricted-pos-roots}, and recall that $C$ is the dominant Weyl chamber corresponding to $\Delta^+(\frk, \frt_f)$.  Denote by $C_{\frg}(i\frt_{f, 0}^*)$  the closed Weyl chamber corresponding to $\Delta^+(\frg, \frt_f)$. Then $C_{\frg}(i\frt_{f, 0}^*)$ is contained in $C$. Define
\begin{equation}
W(\frg, \frt_f)^1=\{w\in W(\frg, \frt_f) \mid w(C_{\frg}(i\frt_{f, 0}^*))\subseteq C\}.
\end{equation}
It is well-known that the multiplication map gives a bijection from $W(\frk, \frt_f)\times W(\frg, \frt_f)^1$ onto $W(\frg, \frt_f)$, see Kostant \cite{K}. Then
$$
\{w\Delta^+(\frp, \frt_f)\mid w\in W(\frg, \frt_f)^1\}
$$
are exactly all the choices of positive roots systems for $\Delta(\frp, \frt_f)$ which are compatible with $\Delta^+(\frk, \frt_f)$. Let us enumerate the elements of $W(\frg, \frt_f)^1$
as $w^{(0)}=e, w^{(1)}, \dots, w^{(s-1)}$. For $\quad 0\leq j\leq s-1$, put
$$
(\Delta^+)^{(j)}(\frp, \frt_f)=w^{(j)}\Delta^+(\frp, \frt_f), \quad (\Delta^+)^{(j)}(\frg, \frt_f)= \Delta^+(\frk, \frt_f)\cup (\Delta^+)^{(j)}(\frp, \frt_f).
$$
Note that  $(\Delta^+)^{(0)}(\frp, \frt_f)=\Delta^+(\frp, \frt_f)$ and $(\Delta^+)^{(0)}(\frg, \frt_f)= \Delta^+(\frg, \frt_f)$. Denote by $\rho_n^{(j)}$ the half sum of the positive roots in $(\Delta^+)^{(j)}(\frp, \frt_f)$.
Then $\rho_n^{(0)}=\rho_n$, and we have
$$
w^{(j)}\rho=\rho_c+\rho_n^{(j)}, \quad 0\leq j\leq s-1.
$$
Recall from Lemma 9.3.2 of \cite{W} that the spin module decomposes into the following $\frk$-types:
$$
S_G=\bigoplus_{j=0}^{s-1} 2^{[l_0/2]}  E_{\rho_n^{(j)}},
$$
where $l_0=\dim_{\bbC} \fra_f$. Now as in \cite{D}, the spin norm of the $\frk$-type $E_{\mu}$ is defined to be
\begin{equation}\label{spin-norm}
\|\mu\|_{\rm{spin}}:=\min_{0\leq j\leq s-1} \|\{\mu-\rho_n^{(j)}\}+\rho_c\|.
\end{equation}
Here $\{\mu-\rho_n^{(j)}\}$ denotes the unique dominant weight to which $\mu-\rho_n^{(j)}$ is conjugate under the action of $W(\frk, \frt_f)$. For instance, $\{-\rho_c\}=\rho_c$. Note that the lowest weights of $S_G$ (without multiplicity) are precisely $-\rho_n^{(j)}$,  $0\leq j\leq s-1$. Thus we have
$$
\|0\|_{\rm{spin}}=\|\rho\|, \quad \|\rho_n^{(j)}\|_{\rm{spin}}=\|\rho_c\|, \quad 0\leq j\leq s-1.
$$

Since $G(\bbR)$ is in the Harish-Chandra class, we may and we will define the lambda norm (resp., spin norm) of a $K(\bbR)$-type as
the lambda norm (resp., spin norm) of any of its highest weight. Now a $K(\bbR)$-type of a $(\frg, K(\bbR))$-module $\pi$ is called a \emph{lambda-lowest} (resp., \emph{spin-lowest}) \emph{$K(\bbR)$-type} if its lambda norm (resp. spin norm) attains the minimum among all the $K(\bbR)$-types of $\pi$.

Now let us prepare a few lemmas about the  u-small convex hull.

\begin{lemma}\label{half-u-small} The convex set
$R(\frac{1}{2}\Delta(\frp,\frt_f))$ is the convex hull of
all the extremal weights of the spin module $S_G$.
\end{lemma}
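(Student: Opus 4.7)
The plan is to recognize $R(\tfrac{1}{2}\Delta(\frp,\frt_f))$ as a zonotope (a Minkowski sum of centred segments), and then to identify its vertices with the extremal weights of the spin module $S_G$. The argument has three main steps.

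First, I would rewrite the defining expression by pairing each noncompact root $\alpha$ with $-\alpha \in \Delta(\frp,\frt_f)$. For coefficients $b_\alpha, b_{-\alpha} \in [0,1]$, the joint contribution of the pair is
\[
b_\alpha\,\frac{\alpha}{2} + b_{-\alpha}\Bigl(-\frac{\alpha}{2}\Bigr) \;=\; (b_\alpha - b_{-\alpha})\,\frac{\alpha}{2},
\]
which sweeps out the full segment $[-\alpha/2,\,\alpha/2]$ as $(b_\alpha, b_{-\alpha})$ ranges over $[0,1]^2$. Summing these independent pair contributions exhibits
\[
R(\tfrac{1}{2}\Delta(\frp,\frt_f)) \;=\; \sum_{\alpha \in \Delta^+(\frp,\frt_f)} \Bigl[-\frac{\alpha}{2},\,\frac{\alpha}{2}\Bigr],
\]
a zonotope.

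Second, I would invoke the elementary zonotope identity. The linear surjection $\Phi\colon [-1,1]^N \to i\frt_{f,0}^*$, with $N = |\Delta^+(\frp,\frt_f)|$, given by $(t_\alpha) \mapsto \sum_\alpha t_\alpha(\alpha/2)$, has image equal to the zonotope above. Since $[-1,1]^N = \mathrm{conv}(\{\pm 1\}^N)$ and linear maps commute with convex hulls, one obtains
\[
R(\tfrac{1}{2}\Delta(\frp,\frt_f)) \;=\; \mathrm{conv}\Bigl\{\sum_{\alpha \in \Delta^+(\frp,\frt_f)} \epsilon_\alpha\,\frac{\alpha}{2} \;\Big|\; \epsilon_\alpha \in \{\pm 1\}\Bigr\}.
\]

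Third, I would match this sign-vertex set with the extremal weights of $S_G$. Using the polarization $\frp = \frp^+ \oplus \frp^-$ induced by $\Delta^+(\frp,\frt_f)$ and the standard realization of $S_G$ with lowest $\frt_f$-weight $-\rho_n$, the $\frt_f$-weights of $S_G$ are precisely the sums $-\rho_n + \sum_{\alpha \in T} \alpha$ as $T$ ranges over subsets of $\Delta^+(\frp,\frt_f)$. Rewriting these with $\epsilon_\alpha = +1$ for $\alpha \in T$ and $\epsilon_\alpha = -1$ otherwise recovers the sign-vertex parametrization above. Combined with Step 2, this yields the asserted equality.

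The main obstacle, as I see it, is in Step 3: one must be careful about what "extremal weights of the spin module" means here. The set $\bigcup_j W(\frk,\frt_f)\cdot\rho_n^{(j)}$ of highest weights of the $\frk$-constituents in $S_G = \bigoplus_j 2^{[l_0/2]} E_{\rho_n^{(j)}}$ and their $W(\frk,\frt_f)$-conjugates is in general strictly smaller than the full sign-vertex set $V$ displayed above. One has to verify, either by direct inspection in each case or by a zonotope/Coxeter-arrangement argument, that $\mathrm{conv}(V)$ agrees with the convex hull of whichever subset is intended; any weights of $S_G$ lying in the interior of the zonotope contribute nothing to the convex hull, so only the identification of the extreme points matters. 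Once this point is settled, the lemma follows directly from the two displays above.
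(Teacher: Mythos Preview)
Your argument is correct and takes a genuinely different route from the paper's. The paper does not pass through the zonotope description at all. Instead it argues by $W(\frk,\frt_f)$-invariance: since both $R(\tfrac{1}{2}\Delta(\frp,\frt_f))$ and the convex hull of the extremal weights of $S_G$ are $W(\frk,\frt_f)$-stable convex sets, it suffices to check that the \emph{dominant} extreme points of the former are exactly the highest weights $\rho_n^{(j)}$ of the $\frk$-constituents of $S_G$, and this is immediate. Your route instead identifies $R(\tfrac{1}{2}\Delta(\frp,\frt_f))$ with the convex hull of \emph{all} $\frt_f$-weights of $S_G$; this is more explicit (it exhibits the zonotope structure and does not require the $\frk$-decomposition of $S_G$ in advance), but then obliges you to compare with the convex hull of only the extremal weights.

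The obstacle you flag in your last paragraph is more easily overcome than you suggest: no case-by-case check or Coxeter-arrangement argument is needed. By the standard fact that every weight of an irreducible $\frk$-module $E_\lambda$ lies in the convex hull of its extremal weights $W(\frk,\frt_f)\cdot\lambda$, every $\frt_f$-weight of $S_G=\bigoplus_j 2^{[l_0/2]}E_{\rho_n^{(j)}}$ already lies in the convex hull of $\bigcup_j W(\frk,\frt_f)\cdot\rho_n^{(j)}$. Hence the convex hull of your sign-vertex set $V$ is contained in, and by the trivial reverse inclusion equal to, the convex hull of the extremal weights. With that one line added, your proof is complete.
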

\begin{proof}
Since any extremal weight of $S_G$ must belong to
$R(\frac{1}{2}\Delta(\frp,\frt_f))$, that LHS contains
RHS is obvious. Since
$R(\frac{1}{2}\Delta(\frp,\frt_f))$ is a convex set
invariant under $W(\frk, \frt_f)$, to verify that LHS is contained in RHS, it suffices to show that all the extremal points of
$R(\frac{1}{2}\Delta(\frp,\frt_f))$ which are dominant
for $\Delta^{+}(\frk,\frt_f)$ belong to the RHS. However,
these points are exactly all the highest weights (without
multiplicity) of $S_G$, and we are done.
\end{proof}

\begin{lemma}\label{real-RS-bound} For any point $\mu\in
R(\frac{1}{2}\Delta(\frp,\frt_f))$, we always have
$$\|\mu+\rho_c\|\leq\|\rho\|.$$
\end{lemma}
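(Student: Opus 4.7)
The plan is to combine Lemma \ref{half-u-small} with the convexity of a quadratic form and a standard Weyl-group inequality.

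First, note that $\mu \mapsto \|\mu + \rho_c\|^2$ is a (strictly) convex quadratic on the real vector space $i\frt_{f,0}^*$, so its maximum on the compact convex polytope $R(\tfrac{1}{2}\Delta(\frp,\frt_f))$ is attained at an extreme point. By Lemma \ref{half-u-small}, the extreme points are exactly the extremal weights of the spin module $S_G$, i.e.\ the $W(\frk,\frt_f)$-conjugates of the highest weights $\rho_n^{(j)}$, $0\le j\le s-1$. Hence it suffices to prove
\begin{equation*}
\|w\rho_n^{(j)} + \rho_c\|^2 \le \|\rho\|^2
\end{equation*}
for every $w\in W(\frk,\frt_f)$ and every $j$.

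The next step is to expand
\begin{equation*}
\|w\rho_n^{(j)}+\rho_c\|^2 = \|\rho_n^{(j)}\|^2 + \|\rho_c\|^2 + 2\langle w\rho_n^{(j)},\rho_c\rangle,
\end{equation*}
using that $w$ preserves the inner product. Both $\rho_c$ and $\rho_n^{(j)}$ are $\Delta^+(\frk,\frt_f)$-dominant (the former tautologically, the latter because $\rho_n^{(j)}$ is the highest weight of a $\frk$-summand of $S_G$, as recorded just before the definition \eqref{spin-norm}). Applying the standard fact that $\langle w\lambda,\mu\rangle \le \langle \lambda,\mu\rangle$ whenever $\lambda,\mu$ are both dominant and $w$ lies in the relevant Weyl group, we obtain
\begin{equation*}
\langle w\rho_n^{(j)},\rho_c\rangle \le \langle \rho_n^{(j)},\rho_c\rangle.
\end{equation*}

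Finally, recalling the identity $w^{(j)}\rho = \rho_c + \rho_n^{(j)}$ from the discussion preceding \eqref{spin-norm}, together with the fact that $w^{(j)}$ is an isometry, gives
\begin{equation*}
\|\rho_n^{(j)}+\rho_c\|^2 = \|w^{(j)}\rho\|^2 = \|\rho\|^2,
\end{equation*}
which, combined with the previous display, yields $\|w\rho_n^{(j)}+\rho_c\|^2 \le \|\rho\|^2$ and closes the argument.

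I do not foresee a serious obstacle here: the only conceptual ingredient beyond Lemma \ref{half-u-small} is the convexity/extreme-point reduction, and the numerical step at an extreme point reduces to the elementary dominance inequality for the pairing $\langle \cdot,\cdot\rangle$. The mildest point to verify carefully is that each $\rho_n^{(j)}$ is $\frk$-dominant, which is immediate from the compatibility condition built into the choice of $(\Delta^+)^{(j)}(\frp,\frt_f)$.
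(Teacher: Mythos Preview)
Your proof is correct and follows essentially the same route as the paper: reduce to extremal weights of $S_G$ via Lemma \ref{half-u-small} and convexity, then use dominance of $\rho_c$ to pass from arbitrary extremal weights $w\rho_n^{(j)}$ to the highest weights $\rho_n^{(j)}$, where the identity $\rho_n^{(j)}+\rho_c=w^{(j)}\rho$ finishes the job. The paper's version is terser (it compresses your expansion and the inequality $\langle w\rho_n^{(j)},\rho_c\rangle \le \langle \rho_n^{(j)},\rho_c\rangle$ into the single remark ``since $\rho_c$ is dominant, one can focus on the case that $\mu$ is a highest weight''), but the logic is the same.
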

\begin{proof}
By Lemma \ref{half-u-small}, we need only to consider the case that
$\mu$ is an extremal weight of $S_G$. Since $\rho_c$ is dominant
for $\Delta^{+}(\frk,\frt_f)$, to have an upper bound for
$\|\mu+\rho_c\|$, one can further focus on the case that
$\mu$ is a highest weight of the spin module $S_G$, say $\mu=w^{(j)}\rho-\rho_c$.
Then $\mu+\rho_c=w^{(j)}\rho$ and the conclusion is now obvious.
\end{proof}

\begin{prop}\label{prop-spin-norm-upper-bound-real} Let $E_{\mu}$ be any
u-small $\frk$-type with highest weight $\mu$. Then
$$\|\rho_c\|\leq \|\mu\|_{\mathrm{spin}}\leq \|\rho\|.$$
\end{prop}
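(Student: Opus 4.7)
\emph{Plan.} I would handle the two inequalities separately.

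\emph{Lower bound.} For every index $j$, the vector $\{\mu - \rho_n^{(j)}\}$ is by construction dominant for $\Delta^+(\frk,\frt_f)$, and so is $\rho_c$. Since the inverse Cartan matrix of $\Delta^+(\frk,\frt_f)$ has nonnegative entries, any two vectors in the closed dominant chamber pair nonnegatively. Expanding
$$
\|\{\mu - \rho_n^{(j)}\} + \rho_c\|^2
  = \|\{\mu-\rho_n^{(j)}\}\|^2
  + 2\langle \{\mu-\rho_n^{(j)}\},\rho_c\rangle
  + \|\rho_c\|^2
  \geq \|\rho_c\|^2,
$$
and minimizing over $j$ yields $\|\mu\|_{\mathrm{spin}}\geq \|\rho_c\|$.

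\emph{Upper bound.} The strategy is to exhibit an index $j_0$ with
$\mu - \rho_n^{(j_0)} \in R(\tfrac12\Delta(\frp,\frt_f))$. Granting this, the $W(\frk,\frt_f)$-invariance of $R(\tfrac12\Delta(\frp,\frt_f))$ transfers the membership to the dominant conjugate $\{\mu-\rho_n^{(j_0)}\}$, and Lemma \ref{real-RS-bound} then gives
$$
\|\mu\|_{\mathrm{spin}}
  \leq \|\{\mu-\rho_n^{(j_0)}\}+\rho_c\|
  \leq \|\rho\|.
$$

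To produce $j_0$: fix the $j$-th admissible positive system and expand $\mu = \sum_{\alpha\in(\Delta^+)^{(j)}(\frp,\frt_f)} c_\alpha^{(j)}\alpha$. Pairing each noncompact root with its negative, the hypothesis $\mu\in R(\Delta(\frp,\frt_f))$ is exactly $c_\alpha^{(j)}\in [-1,1]$ for every $\alpha$, and the analogous description for $R(\tfrac12\Delta(\frp,\frt_f))$ uses the range $[-\tfrac12,\tfrac12]$. Since $\rho_n^{(j)}=\tfrac12\sum_\alpha \alpha$, the subtraction shifts all coefficients by $-\tfrac12$, so the required membership $\mu-\rho_n^{(j_0)}\in R(\tfrac12\Delta(\frp,\frt_f))$ is equivalent to $c_\alpha^{(j_0)}\in[0,1]$ for every $\alpha$, i.e., to $\mu$ lying in the positive cone of $(\Delta^+)^{(j_0)}(\frp,\frt_f)$.

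\emph{Main obstacle.} The decisive step is to verify that such $j_0$ always exists, or equivalently that the positive cones of the admissible positive systems $\{(\Delta^+)^{(j)}(\frp,\frt_f)\}_{0\leq j\leq s-1}$ collectively cover $C\cap R(\Delta(\frp,\frt_f))$. My plan is to exploit the Kostant factorization $W(\frg,\frt_f)=W(\frk,\frt_f)\cdot W(\frg,\frt_f)^1$ recorded in the paper: it places the admissible positive systems in bijection with the sub-chambers that the noncompact root hyperplanes cut out of $C$, and I would use this subdivision, together with the zonotope description of $R(\Delta(\frp,\frt_f))$, to locate the correct $(\Delta^+)^{(j_0)}(\frp,\frt_f)$ for any given dominant u-small $\mu$. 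The boundary cases $\mu=0$ (where $\|0\|_{\mathrm{spin}}=\|\rho\|$) and $\mu=\rho_n^{(j)}$ (where $\|\rho_n^{(j)}\|_{\mathrm{spin}}=\|\rho_c\|$) are compatible with both inequalities and can serve as sanity checks.
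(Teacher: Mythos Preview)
Your approach coincides with the paper's. The lower bound is argued identically. For the upper bound, both you and the paper reduce to producing an index $j_0$ with $\mu-\rho_n^{(j_0)}\in R(\tfrac12\Delta(\frp,\frt_f))$ and then invoke Lemma~\ref{real-RS-bound}.

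The step you isolate as the ``main obstacle''---the existence of a compatible positive system $(\Delta^+)^{(j_0)}(\frp,\frt_f)$ for which $\mu$ can be written as $\sum_{\alpha}c_\alpha\alpha$ with all $c_\alpha\in[0,1]$---is precisely Theorem~6.7(f) of Salamanca-Riba and Vogan \cite{SV}, and the paper simply cites it. So the chamber-geometry argument you sketch is unnecessary: quoting that equivalence closes the gap immediately.

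One small imprecision to watch: the noncompact positive roots are in general linearly dependent (their number exceeds $\dim\frt_f$), so the expansion $\mu=\sum_\alpha c_\alpha^{(j)}\alpha$ is not unique. Membership in $R(\Delta(\frp,\frt_f))$ or $R(\tfrac12\Delta(\frp,\frt_f))$ should therefore be phrased as the \emph{existence} of coefficients in the stated range, not as a condition on ``the'' coefficients. This does not damage the logic, since existence is all that is needed, but your sentence ``the hypothesis $\mu\in R(\Delta(\frp,\frt_f))$ is exactly $c_\alpha^{(j)}\in[-1,1]$ for every $\alpha$'' overstates the equivalence.
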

\begin{proof}
The first inequality is obvious from \eqref{spin-norm}.
We note that $\|\mu\|_{\mathrm{spin}}=\|\rho_c\|$ if
and only if $\mu$ is a highest weight of the spin module $S_G$. For the second inequality, we use Theorem 6.7(f) of \cite{SV}, which says that there is a positive system
$(\Delta^{+})^{(j)}(\frg,\frt_f)$ containing
$\Delta^{+}(\frk,\frt_f)$ such that
$$\mu=\sum_{\beta\in(\Delta^{+})^{(j)}(\frp,\frt_f)} c_{\beta}\beta, \quad  c_{\beta}\in [0,1].$$
Note that
$$\|\mu\|_{\mathrm{spin}}\leq \|\{\mu-\rho_n^{(j)}\}+\rho_c\|.$$
Since $\rho_n^{(j)}$ is the half sum of the positive roots in $(\Delta^{+})^{(j)}(\frp,\frt_f)$, we have that $\mu-\rho_n^{(j)}\in
R(\frac{1}{2}\Delta(\frp,\frt_f))$, which is $W(\frk, \frt_f)$
invariant. Hence $\{\mu-\rho_n^{(j)}\}\in
R(\frac{1}{2}\Delta(\frp,\frt_f))$. Therefore, by Lemma \ref{real-RS-bound}, we have
$$\| \{\mu-\rho_n^{(j)}\}+\rho_c\|\leq \|\rho\|.$$
\end{proof}

To compute the u-small $\frk$-types more effectively, let us explicitly
write down  Theorem 6.7(d) of \cite{SV} under the current setting. We prepare a bit more notation.
Let $\{\alpha_1, \dots, \alpha_l\}$ be the simple roots for $\Delta^+(\frg, \frt_f)$, with
$\{\xi_1, \dots, \xi_l\}$ the corresponding fundamental weights.

\begin{lemma}\label{hyperplane-construction}
Any $\mu\in\Lambda^+$ is u-small if and only if $\langle\mu+2\rho_c, w^{(j)}\xi_i
\rangle\leq 2\langle\rho, \xi_i
\rangle$, $1\leq i\leq l$, $0\leq j\leq s-1$.
\end{lemma}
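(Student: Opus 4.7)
The statement is essentially a transcription of Theorem~6.7(d) of Salamanca-Riba--Vogan \cite{SV} into the $w^{(j)}\xi_i$ notation fixed here. My plan is to simplify the stated inequality, dispose of the forward direction by direct maximization on $R(\Delta(\frp, \frt_f))$, and pull the reverse direction out of \cite{SV}.

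First I would rewrite the bound. Using the orthogonality of $w^{(j)}$ and the identity $w^{(j)}\rho = \rho_c + \rho_n^{(j)}$ recalled in Section~5, one sees that $2\langle \rho, \xi_i\rangle = 2\langle w^{(j)}\rho, w^{(j)}\xi_i\rangle = \langle 2\rho_c, w^{(j)}\xi_i\rangle + \langle 2\rho_n^{(j)}, w^{(j)}\xi_i\rangle$, so after cancellation the claimed inequality becomes equivalent to $\langle \mu, w^{(j)}\xi_i\rangle \le \langle 2\rho_n^{(j)}, w^{(j)}\xi_i\rangle$ for every pair $(i,j)$.

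The forward direction is then immediate. Any $\mu \in R(\Delta(\frp, \frt_f))$ may be written as $\mu = \sum_{\alpha\in\Delta(\frp,\frt_f)} b_\alpha \alpha$ with $0 \le b_\alpha \le 1$, and since $w^{(j)}\xi_i$ is a fundamental weight of $(\Delta^+)^{(j)}(\frg, \frt_f)$ it pairs non-negatively with every root in $(\Delta^+)^{(j)}(\frp, \frt_f)$ and non-positively with every root in its negative. Hence $\langle \mu, w^{(j)}\xi_i\rangle$ is maximized by setting $b_\alpha = 1$ for $\alpha \in (\Delta^+)^{(j)}(\frp, \frt_f)$ and $b_\alpha = 0$ on the negatives, producing precisely $\langle 2\rho_n^{(j)}, w^{(j)}\xi_i\rangle$ as the upper bound.

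For the reverse direction I would invoke Theorem~6.7(d) of \cite{SV}, which characterizes the $\frk$-dominant members of $R(\Delta(\frp, \frt_f))$ by the inequality $\langle \mu + 2\rho_c, \omega\rangle \le 2\langle \rho, \omega\rangle$ as $\omega$ ranges over the fundamental weights of \emph{every} positive system of $\Delta(\frg, \frt_f)$ containing $\Delta^+(\frk, \frt_f)$. The Kostant-type bijection $W(\frk, \frt_f) \times W(\frg, \frt_f)^1 \to W(\frg, \frt_f)$ identifies the compatible positive systems with the family $(\Delta^+)^{(j)}$, and hence their fundamental weights with the set $\{w^{(j)}\xi_i : 1\le i \le l,\, 0\le j \le s-1\}$; the SV criterion therefore reduces precisely to the one stated. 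The main obstacle really is this last ingredient: without quoting \cite{SV} one would have to show directly that the facets of the zonotope $R(\Delta(\frp, \frt_f))$ meeting the $\frk$-dominant chamber have outward normals among $\{w^{(j)}\xi_i\}$, an analysis of the extreme points $2\rho_n^{(j)}$ that is bypassed here.
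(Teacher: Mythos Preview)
Your proposal is correct and follows essentially the same route as the paper. Both arguments rewrite the inequality via $w^{(j)}\rho=\rho_c+\rho_n^{(j)}$ and $\langle w^{(j)}\rho,w^{(j)}\xi_i\rangle=\langle\rho,\xi_i\rangle$ to the equivalent form $\langle\mu-2\rho_n^{(j)},w^{(j)}\xi_i\rangle\le 0$, and then identify this with Theorem~6.7(d) of \cite{SV}; the only difference is that you supply an independent zonotope-maximization argument for the forward direction, which is harmless but redundant since the cited theorem already gives both implications.
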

\begin{proof}
Note that $w^{(j)} \alpha_i$, $1\leq i\leq l$, are  the simple roots of $(\Delta^+)^{(j)}(\frg, \frt_f)$, and that
$w^{(j)} \xi_i$, $1\leq i\leq l$, are the corresponding fundamental weights. Thus by Theorem 6.7(d) of \cite{SV}, $\mu$ is u-small if and only if $\langle\mu-2\rho_n^{(j)}, w^{(j)}\xi_i \rangle\leq 0$, which is further equivalent to that
$$
\langle\mu+2\rho_c-2w^{(j)}\rho, w^{(j)}\xi_i \rangle\leq 0, \quad 1\leq i\leq l, \quad 0\leq j\leq s-1.
$$
Since $\langle w^{(j)}\rho, w^{(j)}\xi_i \rangle= \langle \rho, \xi_i \rangle$, the desired description follows.
\end{proof}

Now let us explain our strategy for Theorem C. For $0\leq j\leq s-1$, put
\begin{equation}\label{Delta-mu-j}
\Delta(\mu, j)=\{\mu-\rho_n^{(j)}\}-\{\mu-\beta-\rho_n^{(j)}\}.
\end{equation}
Then
\begin{align*}
\|\{\mu-\rho_n^{(j)}\}+\rho_c\|^2
&= \|\rho_c\|^2 + \|\{ \mu-\rho_n^{(j)}  \} \|^2 + 2 \langle\rho_c, \{\mu-\rho_n^{(j)}\}\rangle\\
&= \|\rho_c\|^2 + \| \mu-\rho_n^{(j)}  \|^2 + 2 \langle\rho_c, \{\mu-\rho_n^{(j)}\}\rangle.
\end{align*}
Similarly,
$$
\|\{\mu-\beta-\rho_n^{(j)}\}+\rho_c\|^2= \|\rho_c\|^2 + \| \mu-\beta-\rho_n^{(j)}  \|^2 + 2 \langle\rho_c, \{\mu-\beta-\rho_n^{(j)}\}\rangle.
$$
Thus
\begin{equation}\label{Diff-j}
\|\{\mu-\rho_n^{(j)}\}+\rho_c\|^2- \|\{\mu-\beta-\rho_n^{(j)}\}+\rho_c\|^2=\rm{I}+ \rm{II}.
\end{equation}
where \begin{equation}\label{I}
{\rm I}=2 \langle\rho_c, \Delta(\mu, j)\rangle,
 \end{equation}
 and
 \begin{equation}\label{II}
 {\rm II}=\| \mu-\rho_n^{(j)}  \|^2- \| \mu-\beta-\rho_n^{(j)}  \|^2.
 \end{equation}
The term ${\rm II }$ is relatively easier to analyze, while the term ${\rm I }$ is subtle. Indeed, note firstly that $\Delta(\mu, j)$ lies in $\Lambda$, the weight lattice for $\Delta(\frk, \frt_f)$.  Secondly, let $w\in W(\frk, \frt_f)$ be such that
\begin{equation}\label{mu-beta-rhon-w}
\{\mu-\beta-\rho_n^{(j)}\}=w(\mu-\beta-\rho_n^{(j)}).
\end{equation}
Then we have
\begin{align*}
\{\mu-\rho_n^{(j)}\}-\{\mu-\beta-\rho_n^{(j)}\} &=(\{\mu-\rho_n^{(j)}\}-w(\mu-\rho_n^{(j)}))
+(w(\mu-\rho_n^{(j)})-w(\mu-\beta-\rho_n^{(j)}))\\
&=(\{\mu-\rho_n^{(j)}\}-w(\mu-\rho_n^{(j)})) +w\beta.
\end{align*}
By the
highest weight theorem (see e.g. Theorem 5.5 of \cite{Kn}), the first term above lies in $\Pi$. Thus its inner product with $\rho_c$ is always non-negative. Unlike the complex case studied in \cite{D16}, we may no longer conclude that $w\beta\in
\Delta^{+}(\frk, \frt_f)$. Indeed, the root $\beta$ itself may not lives in $\Delta(\frk, \frt_f)$. Therefore, we can \emph{not} always have that ${\rm I}>0$. However, by the same proof for \eqref{I-parabolic-lower-bound} below, we have that
$$
\langle \rho_c, w\beta\rangle\geq
\langle \rho_c, w_0\beta\rangle=\langle w_0^{-1}\rho_c, \beta\rangle=\langle w_0\rho_c, \beta\rangle=
-\langle \rho_c, \beta\rangle.
$$
Here $w_0$ is the longest element of $W(\frk, \frt_f)$. Therefore, there is a naive lower bound for I. Namely,
\begin{equation}\label{I-naive-lower-bound}
{\rm I} \geq -2 \langle\rho_c, \beta\rangle.
\end{equation}

In actual calculation when $\frg_0$ is exceptional, we will adopt the parabolic subgroups of $W(\frk, \frt_f)$ to sharpen the above lower bound. To be more precise, let $W_k$ be the subgroup of $W(\frk, \frt_f)$ generated by $s_{\gamma_1}, \dots, \widehat{s_{\gamma_k}}, \dots, s_{\gamma_l}$, where  the hat means the $k$-th element is omitted. Let $w_{0, k}$ be the longest element of $W_k$. Then we have
\begin{equation}\label{I-parabolic-lower-bound}
\langle\rho_c, w\beta\rangle \geq  \langle\rho_c, w_{0, k}\beta\rangle, \quad \forall w\in W_k.
\end{equation}
Indeed, let $w^{-1}=s_{\delta_1}\cdots s_{\delta_n}$ be a reduced decomposition of $w^{-1}$ into simple reflections, where $\delta_i\in\{\gamma_1, \dots, \gamma_l\}$.
Then by Lemma 5.5 of \cite{DH11}, we have
\begin{equation}\label{winv-beta}
\rho_c- w^{-1}\rho_c =\sum_{k=1}^{n} \langle \rho_c,
\check{\delta_{k}}\rangle\, s_{\delta_1}s_{\delta_2}\cdots
s_{\delta_{k-1}}(\delta_{k})=\sum_{k=1}^{n} s_{\delta_1}s_{\delta_2}\cdots
s_{\delta_{k-1}}(\delta_{k}),
\end{equation}
where $\check{\delta_{k}}$ is the dual root of $\delta_{k}$ and each $s_{\delta_1}s_{\delta_2}\cdots
s_{\delta_{k-1}}(\delta_{k})$ is a positive root. Since $\beta$ is a dominant weight and each element $w$ of $W_k$ can be extended to $w_{0,k}$ by adding simple reflections,
now \eqref{I-parabolic-lower-bound} follows from \eqref{winv-beta} and that
$$
\langle\rho_c, w\beta\rangle=\langle  w^{-1}\rho_c, \beta\rangle
=\langle  \rho_c, \beta\rangle- \langle  \rho_c-w^{-1}\rho_c, \beta\rangle.
$$
Eventually, we will be able to find positive integers $N_k$ ($1\leq k \leq l$)  such that  \eqref{pencil-reduction-step} holds for any dominant weight $\mu=[a_1, \dots, a_l]$ whenever $a_1\geq N_1$, \emph{or} $a_2\geq N_2$, ..., \emph{or} $a_l\geq N_l$. Therefore,  it remains to check \eqref{pencil-reduction-step} for those u-large $\frk$-types in the following \emph{finite} set
$$
\{\mu=[a_1, \dots, a_l] \mid 0\leq a_k\leq N_k-1, 1\leq k\leq l; \, \mu-\beta \mbox{ is dominant}\}.
$$
And we can leave the latter job to a computer.

The case $E_{6(6)}$ considered in \S\ref{sec-EI} will be a typical example to illustrate our strategy for exceptional Lie algebras.

\section{Classical Lie algebras}\label{sec-classical-Lie-algebras}
This section aims to deal with the classical Lie algebras for Theorem C.

\subsection{$\frs\frl(2n,\bbR)$}
This subsection aims to handle $\frs\frl(2n,\bbR)$ ($n\geq 2$), whose Vogan diagram is presented in Fig.~\ref{Fig-sl-2n-Vogan}. In this case, one calculates that
$$
\Delta^+(\frk, \frt_f)=\{e_i\pm e_j \mid 1\leq i<j\leq n\}, \quad
\Delta^+(\frp, \frt_f)=\Delta^+(\frk, \frt_f)\cup \{2e_i\mid 1\leq i\leq n\}.
$$
We have $W(\frg, \frt_f)^1=\{e, s_{2e_n}\}$, $\beta=2e_1$,
$$
\rho_n=n e_1+(n-1)e_2+\cdots+2e_{n-1}+e_n, \quad \rho_n^{(1)}=n e_1+(n-1)e_2+\cdots+2e_{n-1}-e_n.
$$
Moreover, $\xi_i=e_1+\cdots+e_i$, $1\leq i\leq n$; $\varpi_i=\xi_i$, $1\leq i\leq n-2$,
$$
\varpi_{n-1}=\frac{1}{2}(e_1+\cdots+e_{n-1}-e_n), \quad
\varpi_{n}=\frac{1}{2}(e_1+\cdots+e_{n-1}+e_n).
$$

\begin{figure}[H]
\centering
\scalebox{0.6}{\includegraphics{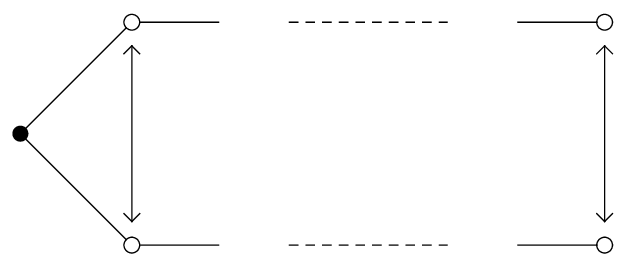}}
\caption{The Vogan diagram for $\frs\frl(2n,\bbR)$}
\label{Fig-sl-2n-Vogan}
\end{figure}

Let $\mu=[m_1, \dots, m_n]$ be a dominant weight. Then $\mu=(a_1, \dots, a_n)$, where
$$
a_i=\sum_{j=i}^{n-2}m_j + \frac{m_{n-1}+m_n}{2}, 1\leq i\leq n-2;\,
a_{n-1}=\frac{m_{n-1}+m_n}{2};\, a_n=\frac{-m_{n-1}+m_n}{2}.
$$
Note that $a_1\geq \dots \geq a_n$ are simultaneously integers or half-integers.

Guided by Lemma \ref{hyperplane-construction}, we calculate that the $\frk$-type $\mu$ is u-small if and only if
\begin{equation}\label{sl-2nR-u-small}
a_1+\cdots+a_k\leq 2nk-k^2+k, \quad 1\leq k\leq n;\quad a_1+\cdots+a_{n-1}-a_n\leq n^2+n.
\end{equation}
We \emph{claim} that $\mu$ is u-small when $a_1\leq n+1$. Indeed, in this case, we would have that
$$
a_1+\cdots+a_k\leq nk+k\leq 2nk-k^2+k,
$$
and that
$$
a_1+\cdots+a_{n-1}-a_n\leq n^2+n.
$$
Therefore, the claim follows from \eqref{sl-2nR-u-small}.

Now let us assume that $\mu$ is u-large and verify \eqref{pencil-reduction-step}.
As mentioned above, we have $a_1\geq n+\frac{3}{2}$.  Firstly, set $j=0$. Then
\begin{align*}
\mu-\rho_n &=(a_1-n,  a_2-(n-1),  \dots, a_n-1),\\
\mu-\beta-\rho_n&=(a_1-n-2,  a_2-(n-1),  \dots, a_n-1),
\end{align*}
and
$$
\|\mu-\rho_n\|^2-\|\mu-\rho_n-\beta\|^2=4(a_1-n-1)>0.
$$
Since $a_1\geq n+\frac{3}{2}$, we always have $a_1-n>|a_1-n-2|$.
Let $B$ be the multi-set consisting of $|a_2-(n-1)|$, ..., $|a_n-1|$. Let $b_1\geq \cdots\geq b_{n}$ be the re-ordering of $a_1-n$ and $B$. Let $c_1\geq \cdots\geq c_{n}$ be the re-ordering of $|a_1-n-2|$ and $B$. Recall that $W(\frk, \frt_f)$ consists of permutations as well as all
\emph{even} sign changes. Thus
\begin{align*}
\{\mu-\rho_n\} &=(b_1,   \dots, b_{n-1}, *),\\
\{\mu-\beta-\rho_n\}&=(c_1,\dots, c_{n-1}, *),
\end{align*}
where the last entries are omitted since they do not affect the parings with $\rho_c=(n-1,\dots, 1,0)$. Since $b_1\geq c_1$, $\dots$, $b_{n-1}\geq c_{n-1}$, we have
$$
2\langle \rho_c, \{\mu-\rho_n\}-\{\mu-\beta-\rho_n\} \rangle \geq 0.
$$
Therefore, the LHS of \eqref{Diff-j} is positive for $j=0$. The same proof shows that
the LHS of \eqref{Diff-j} is positive for $j=1$ as well. Thus \eqref{pencil-reduction-step} holds.

\subsection{$\frs\frl(2n+1,\bbR)$}

This subsection aims to handle $\frs\frl(2n+1,\bbR)$, whose Vogan diagram is presented in Fig.~\ref{Fig-sl-2n+1-Vogan}. In this case, one calculates that
$$
\Delta^+(\frk, \frt_f)=\{e_i\pm e_j \mid 1\leq i<j\leq n\}\cup\{e_1, \dots, e_n\}
$$
and that
$$
\Delta^+(\frp, \frt_f)=\Delta^+(\frk, \frt_f)\cup \{2e_i\mid 1\leq i\leq n\}.
$$
We have $W(\frg, \frt_f)^1=\{e\}$, $\beta=2e_1$,
$$
\rho_c=(n-1/2, n-3/2, \dots, 1/2), \quad \rho_n=(n+1/2, n-1/2, \dots, 3/2).
$$
Moreover, $\xi_i=\varpi_i=e_1+\cdots+e_i$, $1\leq i\leq n-1$, and $\xi_n=\varpi_n=\frac{1}{2}(e_1+\cdots+e_n)$.

\begin{figure}[H]
\centering
\scalebox{0.6}{\includegraphics{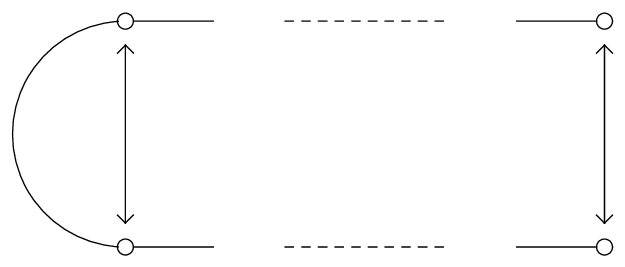}}
\caption{The Vogan diagram for $\frs\frl(2n+1,\bbR)$}
\label{Fig-sl-2n+1-Vogan}
\end{figure}

Let $\mu=[m_1, \dots, m_n]$ be a dominant weight. Then $\mu=(a_1, \dots, a_n)$, where
$$
a_i=\sum_{j=i}^{n-1}m_j + m_n/2, \quad 1\leq i\leq n-1;\quad
a_n=m_n/2.
$$
Note that $a_1\geq \dots \geq a_n$ are simultaneously integers or half-integers.

Guided by Lemma \ref{hyperplane-construction}, we calculate that the $\frk$-type $\mu$ is u-small if and only if
\begin{equation}\label{sl-2n+1R-u-small}
a_1+\cdots+a_k\leq 2nk-k^2+2k, \quad 1\leq k\leq n.
\end{equation}
We \emph{claim} that $\mu$ is u-small when $a_1\leq n+3/2$. Indeed, in this case, we would have that
$$
a_1+\cdots+a_k\leq nk+3k/2\leq 2nk-k^2+2k.
$$
Therefore, the claim follows from \eqref{sl-2n+1R-u-small}.

Now let us assume that $\mu$ is u-large and verify \eqref{pencil-reduction-step}.
As mentioned above, we have $a_1\geq n+2$.   Then
\begin{align*}
\mu-\rho_n &=(a_1-n-1/2,  a_2-n+1/2,  \dots, a_n-3/2),\\
\mu-\beta-\rho_n&=(a_1-n-5/2,  a_2-n+1/2,  \dots, a_n-3/2),
\end{align*}
and
\begin{equation}\label{II-sl-2n+1R}
\|\mu-\rho_n\|^2-\|\mu-\rho_n-\beta\|^2=4(a_1-n-3/2)>0.
\end{equation}
Since $a_1\geq n+2$, we always have $a_1-n-\frac{1}{2}>|a_1-n-\frac{5}{2}|$.
Let $B$ be the multi-set consisting of $|a_2-n+\frac{1}{2}|$, ..., $|a_n-\frac{3}{2}|$. Let $b_1\geq \cdots\geq b_{n}$ be the re-ordering of $a_1-n-\frac{1}{2}$ and $B$. Let $c_1\geq \cdots\geq c_{n}$ be the re-ordering of $|a_1-n-\frac{5}{2}|$ and $B$. Recall that $W(\frk, \frt_f)$ consists of permutations as well as all sign changes. Thus
\begin{align*}
\{\mu-\rho_n\} &=(b_1,   \dots, b_{n-1}, b_n),\\
\{\mu-\beta-\rho_n\}&=(c_1,\dots, c_{n-1}, c_n).
\end{align*}
Since $b_1\geq c_1$, $\dots$, $b_n\geq c_n$ and these inequalities can not happen simultaneously, we have
$$
2\langle \rho_c, \{\mu-\rho_n\}-\{\mu-\beta-\rho_n\} \rangle > 0.
$$
Thus \eqref{pencil-reduction-step} follows from \eqref{II-sl-2n+1R}.

\subsection{$\frs\frl(n,\bbH)$}

This subsection aims to handle $\frs\frl(n,\bbH)$, $n\geq 2$, whose Vogan diagram is presented in Fig.~\ref{Fig-sl-nH-Vogan}. In this case, one calculates that
$$
\Delta^+(\frp, \frt_f)=\{e_i\pm e_j \mid 1\leq i<j\leq n\}
$$
and that
$$
\Delta^+(\frk, \frt_f)=\Delta^+(\frp, \frt_f)\cup \{2e_i\mid 1\leq i\leq n\}.
$$
We have $W(\frg, \frt_f)^1=\{e\}$, $\beta=e_1+e_2$,
$$
\rho_c=(n, n-1, \dots, 2, 1), \quad \rho_n=(n-1, n-2, \dots, 1, 0).
$$
Moreover, $\varpi_i=\xi_i=e_1+\cdots+e_i$, $1\leq i\leq n$.

\begin{figure}[H]
\centering
\scalebox{0.6}{\includegraphics{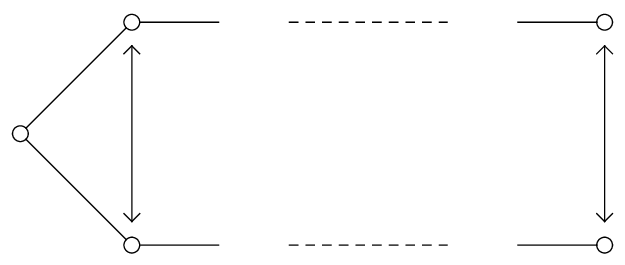}}
\caption{The Vogan diagram for $\frs\frl(n,\bbH)$}
\label{Fig-sl-nH-Vogan}
\end{figure}

Let $\mu=[m_1, \dots, m_n]$ be a dominant weight. Then $\mu=(a_1, \dots, a_n)$, where
$a_i=\sum_{j=i}^{n}m_j$. Guided by Lemma \ref{hyperplane-construction}, we calculate that the $\frk$-type $\mu$ is u-small if and only if
\begin{equation}\label{sl-2n+1R-u-small}
a_1+\cdots+a_k\leq 2nk-k^2-k, \quad 1\leq k\leq n.
\end{equation}
We \emph{claim} that $\mu$ is u-small when $a_1\leq n-1$. Indeed, in this case, we would have that
$$
a_1+\cdots+a_k\leq (n-1)k\leq 2nk-k^2-k.
$$
Therefore, the claim follows from \eqref{sl-2n+1R-u-small}.

Now let us assume that $\mu$ is u-large and verify \eqref{pencil-reduction-step}.
As mentioned above, we have $a_1\geq n$.   Then
\begin{align*}
\mu-\rho_n &=(a_1-n+1,  a_2-n+2,  \dots, a_{n-1}-1, a_n),\\
\mu-\beta-\rho_n&=(a_1-n,  a_2-n+1,  \dots, a_{n-1}-1, a_n),
\end{align*}
and
\begin{equation}\label{II-slnH}
\|\mu-\rho_n\|^2-\|\mu-\rho_n-\beta\|^2=2(a_1+a_2-2n+2).
\end{equation}
Since $W(\frk, \frt_f)$ consists of permutations as well as all sign changes, one sees easily that
$$
2\langle \rho_c, \{\mu-\rho_n\}-\{\mu-\beta-\rho_n\}\rangle>0
$$
when $a_2\geq n-1$. Thus \eqref{pencil-reduction-step} follows from \eqref{II-slnH} whenever $a_2\geq n-1$.

Now assume $a_2\leq n-2$. Then we \emph{claim} that $a_1\geq 2n-1$. Indeed, otherwise,
$$
a_1+\cdots+a_k\leq (2n-2)+(n-2)(k-1)\leq 2nk-k^2-k,
$$
and we would conclude that $\mu$ is u-small. Thus the claim holds, and \eqref{II-slnH} says that
\begin{equation}\label{II-slnH-case2}
\|\mu-\rho_n\|^2-\|\mu-\rho_n-\beta\|^2\geq 2.
\end{equation}
On the other hand, let $B$ be the multi-set of $|a_3-n+3|$, ..., $|a_{n-1}-1|$, $|a_n|$.
Denote the members of $B$ which are greater than $n-2-a_2$ by $b_1\geq \cdots \geq b_k$, and collect the remaining members of $B$ by $c_1\geq \cdots \geq c_t$. Here $t+k=n-2$. Then
\begin{align*}
\{\mu-\rho_n\} &=(a_1-n+1,  b_1,  \dots, b_k,  n-2-a_2, c_1, \dots, c_t),\\
\{\mu-\beta-\rho_n\}&=(a_1-n,  b_1,  \dots, b_k,  n-1-a_2, c_1, \dots, c_t).
\end{align*}
Thus
$$
2\langle \rho_c, \{\mu-\rho_n\}-\{\mu-\beta-\rho_n\}\rangle=2(n-(n-(k+1)))=2(k+1)>0.
$$
Therefore \eqref{pencil-reduction-step} follows from \eqref{II-slnH-case2} whenever $a_2\leq n-2$.

To sum up, Theorem C holds for $\frs\frl(n, \bbH)$, $n\geq 2$.

\section{Exceptional Lie algebras}
This section aims to deal with the exceptional Lie algebras for Theorem C.

\subsection{EI$=E_{6(6)}$}\label{sec-EI}
This subsection aims to handle EI, whose Vogan diagram is presented in Fig.~\ref{Fig-EI-Vogan}. The simple roots for $\Delta^+(\frg, \frt_f)$ are
$$
\alpha_4:=\beta_2, \quad \alpha_3:=\beta_4, \quad \alpha_2:=\frac{1}{2}(\beta_3+\beta_5), \quad \alpha_1:=\frac{1}{2}(\beta_1+\beta_6).
$$
The root system $\Delta^+(\frg, \frt_f)$ is $F_4$, with $\alpha_1, \alpha_2$ short and $\alpha_3, \alpha_4$ long.
On the other hand, $\Delta^+(\frk, \frt_f)$ is $C_4$, and has simple roots
$$
\gamma_1:=\alpha_2+\alpha_3+\alpha_4, \quad \gamma_2:=\alpha_1, \quad \gamma_3:=\alpha_2, \quad \gamma_4:=\alpha_3.
$$
Here $\gamma_4$ is long. One calculates that $W(\frg, \frt_f)^1=\{e, s_{\alpha_4}, s_{\alpha_3+\alpha_4}s_{\alpha_4}\}$ and that
$$
\beta=[0,0,0,1], \quad \rho_n=[5,1,1,0], \quad \rho_n^{(1)}=[3,1,1,1], \quad \rho_n^{(2)}=[1,1,3, 0].
$$

\begin{figure}[H]
\centering
\scalebox{0.5}{\includegraphics{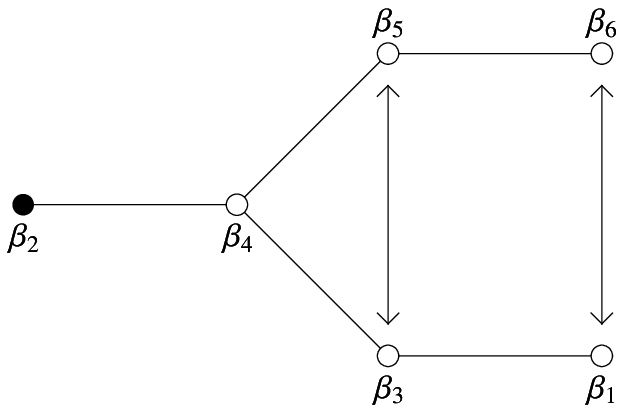}}
\caption{The Vogan diagram for EI}
\label{Fig-EI-Vogan}
\end{figure}

Guided by Lemma \ref{hyperplane-construction}, we calculate that the $\frk$-type $\mu=[a, b, c, d]$ is u-small if and only if
\begin{align*}
&a+2b+2c+2d\leq 18, \quad  2a+3b+4c+4d\leq 34, \quad 3a+4b+5c+6d\leq 48, \\
&a+b+c+d\leq 14, \quad  a+2b+3c+4d\leq24.
\end{align*}
In particular, there are $922$ u-small $\frk$-types in total.

Now let us consider the distribution of the spin norm along pencils. Let $\mu=[a, b, c, d]$ be a u-large $\frk$-type such that $\mu-\beta$ is dominant. Then $a, b, c\geq 0$ and $d\geq 1$. It is easy to calculate that
\begin{equation}\label{II-EI}
\| \mu-\rho_n^{(j)}  \|^2- \| \mu-\beta-\rho_n^{(j)}  \|^2=
\begin{cases}
2(a+2b+3c+4d-12) & \mbox{ if } j=0; \\
2(a+2b+3c+4d-14) & \mbox{ if } j=1, 2.
\end{cases}
\end{equation}

Let us handle the term I defined in \eqref{I}. We \emph{claim} that  it suffices to use elements from $W_1$ to conjugate all the $\mu-\beta-\rho_n^{(j)}$ to $C$ when $a\geq 9$. Let us explain the details for $j=0$.
Note firstly that for any fixed $b, c\geq 0$ and $d\geq 1$, we can find $w\in W_1$
such that
$$
w(\mu-\beta-\rho_n)=\{\mu-\beta-\rho_n\}
$$
when $a$ is big enough. Secondly, let $w_{0, 1}$ be the longest element of $W_1$, then
$$ \langle w_{0, 1}[a-5, -1, -1,
0], \check{\gamma}_1\rangle \leq \langle w[a-5, -1, -1, 0],
\check{\gamma}_1\rangle\leq \langle w[a-5, b-1, c-1, d-1],
\check{\gamma}_1\rangle.
$$
The above first step uses Lemma 7.4  of \cite{D16}, while the second step uses Lemma 7.5 there. But we have
$$ \langle w_{0, 1}[a-5, -1, -1,
0], \check{\gamma}_1\rangle=a-9.
$$
Moreover, when $j=1,2$, we  have
$$
\langle w_{0, 1}[a-3, -1, -1, -1], \check{\gamma}_1\rangle=a-9, \quad
\langle w_{0, 1}[a-1, -1, -3, 0], \check{\gamma}_1\rangle=a-9,
$$
respectively. Thus the claim holds. Similarly,  it suffices to use elements from $W_2$ (resp., $W_3$, $W_4$) to conjugate all the $\mu-\beta-\rho_n^{(j)}$ to $C$ when $b\geq 8$ (resp., $c\geq 7$, $d\geq 8$).

It is direct to check that
$$
2\langle \rho_c, w\beta\rangle \geq -4, \quad \forall w\in W_1,
$$
and that
$$
2\langle \rho_c, w\beta\rangle > 0, \quad \forall w\in W_2, W_3, W_4.
$$
Note that the naive lower bound for I here is $-2\langle \rho_c, \beta\rangle=-20$.
Now in view of \eqref{Diff-j} and \eqref{II-EI}, the inequality \eqref{pencil-reduction-step} holds whenever $a\geq 13$, \emph{or} $b\geq 8$, \emph{or} $c\geq 7$, \emph{or} $d\geq 8$.

Finally, it remains to check \eqref{pencil-reduction-step} for any u-large $\mu=[a, b, c, d]$ such that $0\leq a\leq 12$, $0\leq b\leq 7$, $0\leq c\leq 6$, $1\leq d\leq 7$. This has been carried out on a computer. Thus Theorem C holds for EI.

\subsection{EII$=E_{6(2)}$}\label{sec-EII}
This subsection aims to handle EII, whose Vogan diagram is presented in Fig.~\ref{Fig-EII-Vogan}. In this case, $\Delta^+(\frg, \frt_f)$ is $E_6$, with simple roots
$\alpha_1, \dots, \alpha_6$.  We have that $|W(\frg, \frt_f)^1|=36$.
 We set
 $$
 \gamma_i=\alpha_{7-i}, \, 1\leq i\leq 4; \, \gamma_5=\alpha_1; \, \gamma_6=\alpha_1+2\alpha_2+2\alpha_3+3\alpha_4+2\alpha_5+\alpha_6.
 $$
Then $\beta=[0, 0, 1, 0, 0, 1]$.

\begin{figure}[H]
\centering
\scalebox{0.6}{\includegraphics{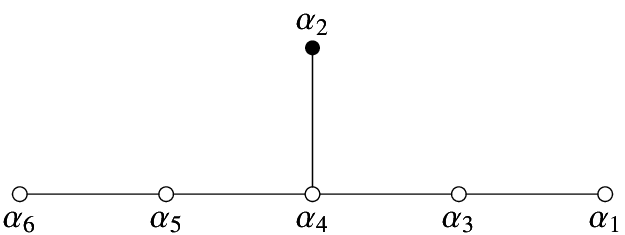}}
\caption{The Vogan diagram for EII}
\label{Fig-EII-Vogan}
\end{figure}

Guided by Lemma \ref{hyperplane-construction}, we calculate that the $\frk$-type $\mu=[a, b, c, d, e, f]$ is u-small if and only if
\begin{align*}
& a+ b+c+ d+e\leq 12, \quad 2a+ 4b+3c+ 2d+e\leq 24,\\
& a+2b+3c+4d+2e\leq 24,   \quad a+2b+3c+4d+5e+3f\leq 60, \\
& a+2b+3c+2d+e+f\leq 24, \quad 5a+10b+9c+8d+7e+3f\leq 96,\\
& 7a+8b+9c+10d+5e+3f\leq 96, \quad  3a+4b+5c+4d+3e+f\leq 44.
\end{align*}
In particular, there are $22122$ u-small $\frk$-types in total.

Now let us consider the distribution of the spin norm along pencils. Let $\mu=[a, b, c, d, e, f]$ be a u-large $\frk$-type such that $\mu-\beta$ is dominant. Then $c, f\geq 1$ and $a, b, d, e\geq 0$. It is easy to calculate that
\begin{equation}\label{II-EII}
\| \mu-\rho_n^{(j)}  \|^2- \| \mu-\beta-\rho_n^{(j)}  \|^2 \geq
a+2b+3c+2d+e+f-14, \quad 0\leq j\leq 35.
\end{equation}

Similar to the EI case, it suffices to use elements from $W_1$ (resp., $W_2$, $W_3$, $W_4$, $W_5$, $W_6$) to conjugate all these $\mu-\beta-\rho_n^{(j)}$ to $C$ when $a\geq 6$ (resp., $b\geq 6$, $c\geq 7$, $d\geq 6$, $e\geq 6$, $f\geq 11$).  Moreover, we have that
$$
2\langle \rho_c, w\beta\rangle \geq -4, \, \forall w\in W_1, W_5; \quad
2\langle \rho_c, w\beta\rangle \geq -8, \, \forall w\in W_6,
$$
and that
$$
2\langle \rho_c, w\beta\rangle > 0, \, \forall w\in W_2, W_3, W_4.
$$
Now in view of \eqref{Diff-j} and \eqref{II-EII}, the inequality \eqref{pencil-reduction-step} holds whenever $a\geq 15$, \emph{or} $b\geq 6$, \emph{or} $c\geq 7$, \emph{or} $d\geq 6$, \emph{or} $e\geq 15$,\emph{ or} $f\geq 20$.

Finally, it remains to check \eqref{pencil-reduction-step} for any u-large $\mu=[a, b, c, d, e, f]$ such that $0\leq a, e\leq 14$, $0\leq b, d\leq 5$, $1\leq c\leq 6$, $1\leq f\leq 19$. This has been carried out on a computer. Thus Theorem C holds for EII.

\subsection{EIV$=E_{6(-26)}$}\label{sec-EIV}
This subsection aims to handle EIV, whose Vogan diagram is presented in Fig.~\ref{Fig-EIV-Vogan}. In this case,
$\Delta^+(\frp, \frt_f)\subset \Delta^+(\frk, \frt_f)$. Thus both $\Delta^+(\frg, \frt_f)$ and $\Delta^+(\frk, \frt_f)$ are $F_4$, with simple roots
$$
\alpha_4:=\beta_2, \quad \alpha_3:=\beta_4, \quad \alpha_2:=\frac{1}{2}(\beta_3+\beta_5), \quad \alpha_1:=\frac{1}{2}(\beta_1+\beta_6).
$$
Here $\alpha_1, \alpha_2$ are short, while $\alpha_3, \alpha_4$ are long. We \emph{identify} $\gamma_i$ with $\alpha_i$, $1\leq i\leq 4$.
We have that $W(\frg, \frt_f)^1=\{e\}$ and that
$$
\beta=[1, 0, 0, 0], \quad \rho_n=[1, 1, 0, 0].
$$

\begin{figure}[H]
\centering
\scalebox{0.5}{\includegraphics{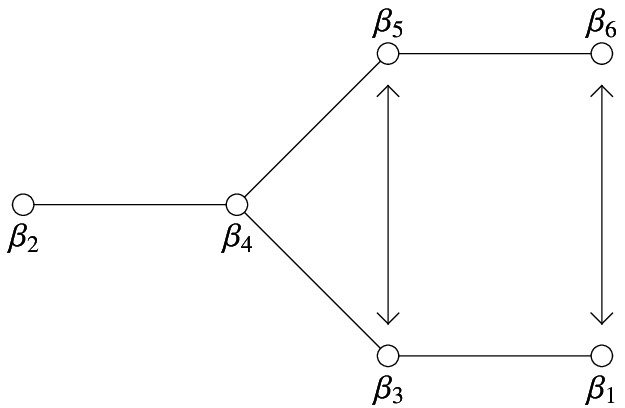}}
\caption{The Vogan diagram for EIV}
\label{Fig-EIV-Vogan}
\end{figure}

Guided by Lemma \ref{hyperplane-construction}, we calculate that the $\frk$-type $\mu=[a, b, c, d]$ is u-small if and only if
\begin{align*}
2a + 3 b + 4 c + 2 d\leq 10, \quad  a + 2 b + 3 c + 2 d\leq 6.
\end{align*}
In particular, there are $37$ u-small $\frk$-types in total. As observed in the author's thesis,  any $\frk$-type whose spin norm is upper bounded by $\|\rho\|$ must be u-small.

Now let us consider the distribution of the spin norm along pencils. Let $\mu=[a, b, c, d]$ be a u-large $\frk$-type such that $\mu-\beta$ is dominant. Then $a\geq 1$ and $b, c, d\geq 0$. Similar to the EI case,   it suffices to use elements from $W_1$ (resp., $W_2$, $W_3$, $W_4$) to conjugate $\mu-\beta-\rho_n$ to $C$ when $a\geq 5$ (resp., $b\geq 2$, $c\geq 2$, $d\geq 3$). Let $w\in W_k$ be an element such that
$$
\{\mu-\beta-\rho_n\}=w(\mu-\beta-\rho_n).
$$
Since $\beta=2\alpha_1+3\alpha_2+2\alpha_3+\alpha_4$, by the technique of Lemma 7.3 of \cite{D16}, we have that $w\beta\in \Delta^+(\frk, \frt_f)$ for any $w\in W_k$, $1\leq k\leq 4$. Moreover,
$$
\{\mu-\rho_n\}-\{\mu-\beta-\rho_n\} =(\{\mu-\rho_n\}-w(\mu-\rho_n)) +w\beta.
$$
Thus \eqref{pencil-reduction-step} holds whenever $a\geq 5$, \emph{or} $b\geq 2$, \emph{or} $c\geq 2$, \emph{or} $d\geq 3$.

Finally, it remains to check \eqref{pencil-reduction-step} for any u-large $\mu=[a, b, c, d]$ such that $1\leq a\leq 4$, $0\leq b\leq 1$, $0\leq c\leq 1$, $0\leq d\leq 2$. This has been carried out on a computer. Thus Theorem C holds for EIV.

\subsection{EV$=E_{7(7)}$}\label{sec-EV}
This subsection aims to handle EV, whose Vogan diagram is presented in Fig.~\ref{Fig-EV-Vogan}. In this case, $\Delta^+(\frg, \frt_f)$ is $E_7$, with simple roots $\alpha_1, \dots, \alpha_7$. Moreover, $\Delta^+(\frk, \frt_f)$ is $A_7$, with simple roots
$$
\gamma_1:=\alpha_1; \quad \gamma_i:=\alpha_{i+1}, \, 2\leq i\leq 6; \quad \gamma_7:=\alpha_1+2\alpha_2+2\alpha_3+3\alpha_4+2\alpha_5+\alpha_6.
$$
We have that $|W(\frg, \frt_f)^1|=72$ and that $\beta=[0,0,0, 1, 0, 0, 0]$.

\begin{figure}[H]
\centering
\scalebox{0.6}{\includegraphics{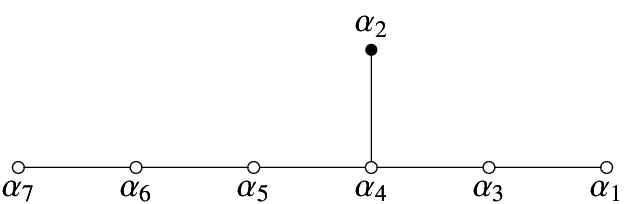}}
\caption{The Vogan diagram for EV}
\label{Fig-EV-Vogan}
\end{figure}

Guided by Lemma \ref{hyperplane-construction}, we calculate that there are $187200$ u-small $\frk$-types in total.

Now let us consider the distribution of the spin norm along pencils. Let
$$
\mu=[a, b, c, d, e, f, g]
$$ be a u-large $\frk$-type such that $\mu-\beta$ is dominant. Then $d\geq 1$ and $a, b, c,  e, f, g\geq 0$. One calculates that
\begin{equation}\label{II-EV}
\| \mu-\rho_n^{(j)}  \|^2- \| \mu-\beta-\rho_n^{(j)}  \|^2 \geq
a+2b+3c+4d+3e+2f+g-20, \, 0\leq j\leq 71.
\end{equation}

The  entry $a\geq 10$ in the following table means that it suffices to use $w\in W_1$ to conjugate all the $\mu-\beta-\rho_n^{(j)}$ to $C$ whenever $a\geq 10$. Other entries of the  first line are interpreted similarly.
\begin{center}
\begin{tabular}{|l|c|c|c|c|c|c|c|}
$W_k$-bound &$a\geq 10$ &   $b \geq 10$ & $c\geq 10$ & $d\geq 11$ & $e\geq 10$ & $f\geq 10$ & $g\geq 10$   \\ \hline
$2\langle\rho_c, w_{0, k}\beta\rangle$&  $-8$ & $0$ & $8$ & $16$ & $8$ & $0$ & $-8$\\
\end{tabular}
\end{center}

Now in view of the above table, \eqref{Diff-j}, \eqref{I-parabolic-lower-bound}, and \eqref{II-EV}, the inequality \eqref{pencil-reduction-step} holds whenever $a\geq 25$, \emph{or} $b\geq 10$, \emph{or} $c\geq 10$, \emph{or} $d\geq 11$, \emph{or} $e\geq 10$, \emph{or} $f\geq 10$, \emph{or} $g\geq 25$.

Finally, it remains to check \eqref{pencil-reduction-step} for any u-large $\mu=[a, b, c, d, e, f, g]$ such that $0\leq a, g\leq 24$, $0\leq b, c, e, f\leq 9$, $1\leq d\leq 10$. This has been carried out on a computer. Thus Theorem C holds for EV.

\subsection{EVI$=E_{7(-5)}$}\label{sec-EVI}
This subsection aims to handle EVI, whose Vogan diagram is presented in Fig.~\ref{Fig-EVI-Vogan}. In this case, $\Delta^+(\frg, \frt_f)$ is $E_7$, with simple roots $\alpha_1, \dots, \alpha_7$. Moreover, $\Delta^+(\frk, \frt_f)$ is $D_6\times A_1$, with simple roots
$$
\gamma_i:=\alpha_{8-i}, \, 1\leq i\leq 4; \, \gamma_5=\alpha_2; \, \gamma_6=\alpha_3;
\, \gamma_7:=2\alpha_1+2\alpha_2+3\alpha_3+4\alpha_4+3\alpha_5+2\alpha_6+\alpha_7.
$$
We have that $|W(\frg, \frt_f)^1|=63$ and that $\beta=[0, 0, 0, 0, 0, 1, 1]$.

\begin{figure}[H]
\centering
\scalebox{0.6}{\includegraphics{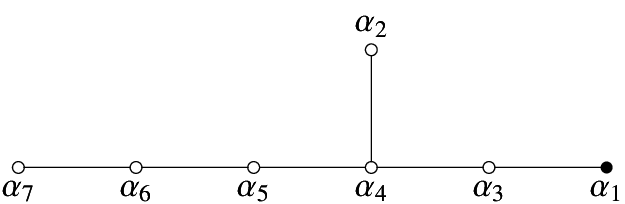}}
\caption{The Vogan diagram for EVI}
\label{Fig-EVI-Vogan}
\end{figure}

Guided by Lemma \ref{hyperplane-construction}, we calculate that there are $105495$ u-small $\frk$-types in total.

Now let us consider the distribution of the spin norm along pencils. Let
$$
\mu=[a, b, c, d, e, f, g]
$$ be a u-large $\frk$-type such that $\mu-\beta$ is dominant. Then $f, g\geq 1$ and $a, b, c,  d, e\geq 0$. One calculates that
\begin{equation}\label{II-EVI}
\| \mu-\rho_n^{(j)}  \|^2- \| \mu-\beta-\rho_n^{(j)}  \|^2 \geq
a+2b+3c+4d+2e+3f+g-20, \, 0\leq j\leq 62.
\end{equation}

The  entry $a\geq 8$ in the following table means that it suffices to use $w\in W_1$ to conjugate all the $\mu-\beta-\rho_n^{(j)}$ to $C$ whenever $a\geq 8$. Other entries of the  first line are interpreted similarly.
\begin{center}
\begin{tabular}{|l|c|c|c|c|c|c|c|}
$W_k$-bound &$a\geq 8$ &   $b \geq 8$ & $c\geq 8$ & $d\geq 8$ & $e\geq 8$ & $f\geq 9$ & $g\geq 17$   \\ \hline
$2\langle\rho_c, w_{0, k}\beta\rangle$&  $-6$ & $2$ & $8$ & $12$ & $4$ & $14$ & $-14$\\
\end{tabular}
\end{center}

Now in view of the above table, \eqref{Diff-j}, \eqref{I-parabolic-lower-bound}, and \eqref{II-EVI}, the inequality \eqref{pencil-reduction-step} holds whenever $a\geq 23$, \emph{or} $b\geq 8$, \emph{or} $c\geq 8$, \emph{or} $d\geq 8$, \emph{or} $e\geq 8$, \emph{or} $f\geq 9$, \emph{or} $g\geq 32$.

Finally, it remains to check \eqref{pencil-reduction-step} for any u-large $\mu=[a, b, c, d, e, f, g]$ such that $0\leq a\leq 22$, $0\leq b, c, d, e\leq 7$, $1\leq f\leq 8$, $1\leq g\leq 31$. This has been carried out on a computer. Thus Theorem C holds for EVI.

\subsection{EVIII$=E_{8(8)}$}\label{sec-EVIII}
This subsection aims to handle EVIII, whose Vogan diagram is presented in Fig.~\ref{Fig-EVIII-Vogan}. In this case, $\Delta^+(\frg, \frt_f)$ is $E_8$, with simple roots $\alpha_1, \dots, \alpha_8$. Moreover, $\Delta^+(\frk, \frt_f)$ is $D_8$, with simple roots
$$
\gamma_1:=2\alpha_1+2\alpha_2+3\alpha_3+4\alpha_4+3\alpha_5+2\alpha_6+\alpha_7;
\,\gamma_i:=\alpha_{10-i}, \, 2\leq i\leq 6; \, \gamma_7=\alpha_2; \, \gamma_8=\alpha_3.
$$
We have that $|W(\frg, \frt_f)^1|=135$ and that $\beta=[0, 0, 0, 0, 0, 0, 1, 0]$.

\begin{figure}[H]
\centering
\scalebox{0.7}{\includegraphics{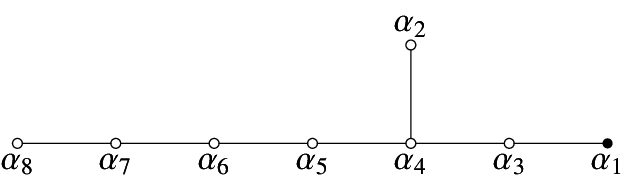}}
\caption{The Vogan diagram for EVIII}
\label{Fig-EVIII-Vogan}
\end{figure}

Guided by Lemma \ref{hyperplane-construction}, we calculate that there are $1379322$ u-small $\frk$-types in total.

Now let us consider the distribution of the spin norm along pencils. Let
$$
\mu=[a, b, c, d, e, f, g, h]
$$ be a u-large $\frk$-type such that $\mu-\beta$ is dominant. Then $g\geq 1$ and $a, b, c, d, e, f, h\geq 0$. One calculates that
\begin{equation}\label{II-EIII}
\| \mu-\rho_n^{(j)}  \|^2- \| \mu-\beta-\rho_n^{(j)}  \|^2 \geq
a+2b+3c+4d+5e+6f+4g+3h-32, \, 0\leq j\leq 134.
\end{equation}

The  entry $a\geq 16$ in the following table means that it suffices to use $w\in W_1$ to conjugate all the $\mu-\beta-\rho_n^{(j)}$ to $C$ whenever $a\geq 16$. Other entries of the  first line are interpreted similarly.
\begin{center}
\begin{tabular}{|l|c|c|c|c|c|c|c|c|}
$W_k$-bound &$a\geq 16$ &   $b \geq 16$ & $c\geq 16$ & $d\geq 16$ & $e\geq 16$ & $f\geq 16$ & $g\geq 17$ & $h\geq 16$  \\ \hline
$2\langle\rho_c, w_{0, k}\beta\rangle$&  $-14$ & $-2$ & $8$ & $16$ & $22$ & $26$ & $28$ & $14$ \\
\end{tabular}
\end{center}

Now in view of the above table, \eqref{Diff-j}, \eqref{I-parabolic-lower-bound}, and \eqref{II-EIII}, the inequality \eqref{pencil-reduction-step} holds whenever $a\geq 43$, \emph{or} $b\geq 16$, \emph{or} $c\geq 16$, \emph{or} $d\geq 16$, \emph{or} $e\geq 16$, \emph{or} $f\geq 16$, \emph{or} $g\geq 17$, \emph{or} $h\geq 16$.

Finally, it remains to check \eqref{pencil-reduction-step} for any u-large $\mu=[a, b, c, d, e, f, g, h]$ such that $0\leq a\leq 42$, $0\leq b, c, d, e, f, h\leq 15$, $1\leq g\leq 16$. This has been done by \texttt{Mathematica} in bout 24 hours. Thus Theorem C holds for EVIII.

\subsection{EIX$=E_{8(-24)}$}\label{sec-EIX}
This subsection aims to handle EIX, whose Vogan diagram is presented in Fig.~\ref{Fig-EIX-Vogan}. In this case, $\Delta^+(\frg, \frt_f)$ is $E_8$, with simple roots $\alpha_1, \dots, \alpha_8$. Moreover, $\Delta^+(\frk, \frt_f)$ is $E_7\times A_1$, with simple roots
$$
\gamma_i:=\alpha_{i}, \, 1\leq i\leq 7; \quad \gamma_8:=2\alpha_1+3\alpha_2+4\alpha_3+6\alpha_4+5\alpha_5+4\alpha_6+3\alpha_7+2\alpha_8.
$$
We have that $|W(\frg, \frt_f)^1|=120$ and that $\beta=[0, 0, 0, 0, 0, 0, 1, 1]$.

\begin{figure}[H]
\centering
\scalebox{0.7}{\includegraphics{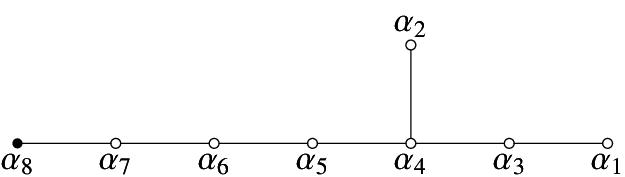}}
\caption{The Vogan diagram for EIX}
\label{Fig-EIX-Vogan}
\end{figure}

Guided by Lemma \ref{hyperplane-construction}, we calculate that there are $577367$ u-small $\frk$-types in total.

Now let us consider the distribution of the spin norm along pencils. Let
$$
\mu=[a, b, c, d, e, f, g, h]
$$ be a u-large $\frk$-type such that $\mu-\beta$ is dominant. Then $g, h\geq 1$ and $a, b, c, d, e, f\geq 0$. One calculates that
\begin{equation}\label{II-EIX}
\| \mu-\rho_n^{(j)}  \|^2- \| \mu-\beta-\rho_n^{(j)}  \|^2 \geq
2a+3b+4c+6d+5e+4f+3g+h-32, \, 0\leq j\leq 119.
\end{equation}

The  entry $a\geq 12$ in the following table means that it suffices to use $w\in W_1$ to conjugate all the $\mu-\beta-\rho_n^{(j)}$ to $C$ whenever $a\geq 12$. Other entries of the  first line are interpreted similarly.
\begin{center}
\begin{tabular}{|l|c|c|c|c|c|c|c|c|}
$W_k$-bound &$a\geq 12$ &   $b \geq 12$ & $c\geq 10$ & $d\geq 10$ & $e\geq 10$ & $f\geq 12$ & $g\geq 13$ & $h\geq 29$  \\ \hline
$2\langle\rho_c, w_{0, k}\beta\rangle$&  $6$ & $14$ & $16$ & $20$ & $22$ & $24$ & $26$ & $-26$ \\
\end{tabular}
\end{center}

Now in view of the above table, \eqref{Diff-j}, \eqref{I-parabolic-lower-bound}, and \eqref{II-EIX}, the inequality \eqref{pencil-reduction-step} holds whenever $a\geq 12$, \emph{or} $b\geq 12$, \emph{or} $c\geq 10$, \emph{or} $d\geq 10$, \emph{or} $e\geq 10$, \emph{or} $f\geq 12$, \emph{or} $g\geq 13$, \emph{or} $h\geq 56$.

Finally, it remains to check \eqref{pencil-reduction-step} for any u-large $\mu=[a, b, c, d, e, f, g, h]$ such that $0\leq a\leq 11$, $0\leq b, f\leq 11$, $0\leq c, d, e\leq 9$, $1\leq g\leq 12$, $1\leq h\leq 55$. This has been done by \texttt{Mathematica} in bout 22 hours. Thus Theorem C holds for EIX.

\subsection{FI$=F_{4(4)}$}\label{sec-FI}
This subsection aims to handle FI, whose Vogan diagram is presented in Fig.~\ref{Fig-FI-Vogan}. In this case, $\Delta^+(\frg, \frt_f)$ is $F_4$, with simple roots
$\alpha_1, \alpha_2, \alpha_3, \alpha_4$. Here $\alpha_1, \alpha_2$ are short, while $\alpha_3, \alpha_4$ are long. Let $s_i$ stand for $s_{\alpha_i}$. We have that
\begin{align*}
W(\frg, \frt_f)^1=\{e, s_4, s_4s_3, s_4s_3s_2, s_4s_3s_2s_1, s_4s_3s_2s_3, s_4s_3s_2s_1s_3, s_4s_3s_2s_3s_4, \\ s_4s_3s_2s_1s_3s_2, s_4s_3s_2s_1s_3s_4, s_4s_3s_2s_1s_3s_2s_3, s_4s_3s_2s_1s_3s_2s_4\}.
\end{align*}
We set
$$
\gamma_i=\alpha_i, \quad 1\leq i\leq 3; \quad \gamma_4=2\alpha_1+4\alpha_2+3\alpha_3+2\alpha_4.
$$
Then $\beta=[0, 0, 1, 1]$, and
\begin{align*}
\rho_n&=[0, 0, 0, 7], \quad \rho_n^{(1)}=[0, 0, 1, 6], \quad \rho_n^{(2)}=[0, 2, 0, 5], \quad \rho_n^{(3)}=[1, 2, 0, 4], \\
\rho_n^{(4)}&=[0, 3, 0, 3], \quad \rho_n^{(5)}=[3, 0, 1, 3], \quad \rho_n^{(6)}=[2, 1, 1, 2], \quad \rho_n^{(7)}=[5, 0, 0, 2],\\
\rho_n^{(8)}&=[2, 0, 2, 1], \quad \rho_n^{(9)}=[4, 1, 0, 1], \quad \rho_n^{(10)}=[0, 0, 3, 0], \quad \rho_n^{(11)}=[4, 0, 1, 0].
\end{align*}

\begin{figure}[H]
\centering
\scalebox{0.6}{\includegraphics{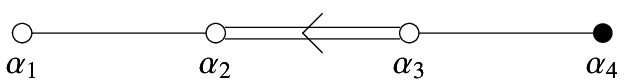}}
\caption{The Vogan diagram for FI}
\label{Fig-FI-Vogan}
\end{figure}

Guided by Lemma \ref{hyperplane-construction}, we calculate that the $\frk$-type $\mu=[a, b, c, d]$ is u-small if and only if
\begin{align*}
& a+b+c\leq 10, \quad a+2 b+2 c\leq 12,  \quad a+b+c+d\leq 14,  \\
&3a+4b+ 5c+d\leq 34, \quad 2 a+3 b+3c+d\leq 24, \quad a+2 b+3c+ d\leq 18.
\end{align*}
In particular, there are $1045$ u-small $\frk$-types in total.

Now let us consider the distribution of the spin norm along pencils. Let $\mu=[a, b, c, d]$ be a u-large $\frk$-type such that $\mu-\beta$ is dominant. Then $c, d\geq 1$ and $a, b\geq 0$. It is easy to calculate that
\begin{equation}\label{II-FI}
\| \mu-\rho_n^{(j)}  \|^2- \| \mu-\beta-\rho_n^{(j)}  \|^2=
\begin{cases}
a+2 b+3 c+ d-9 & \mbox{ if } j=0, 7, 9, 11; \\
a+2 b+3 c+ d-11 & \mbox{ otherwise.}
\end{cases}
\end{equation}

Similar to the EI case, it suffices to use elements from $W_1$ (resp., $W_2$, $W_3$, $W_4$) to conjugate all these $\mu-\beta-\rho_n^{(j)}$ to $C$ when $a\geq 8$ (resp., $b\geq 8$, $c\geq 6$, $d\geq 8$).  Moreover, we have
$$
2\langle \rho_c, w\beta\rangle \geq -1, \, \forall w\in W_1; \quad
2\langle \rho_c, w\beta\rangle \geq -5, \, \forall w\in W_4; \quad
2\langle \rho_c, w\beta\rangle > 0, \, \forall w\in W_2, W_3.
$$
Now in view of \eqref{Diff-j} and \eqref{II-FI}, the inequality \eqref{pencil-reduction-step} holds whenever $a\geq 9$, \emph{or} $b\geq 8$, \emph{or} $c\geq 6$, \emph{or} $d\geq 14$.

Finally, it remains to check \eqref{pencil-reduction-step} for any u-large $\mu=[a, b, c, d]$ such that $0\leq a\leq 8$, $0\leq b\leq 7$, $1\leq c\leq 5$, $1\leq d\leq 13$. This has been carried out on a computer. Thus Theorem C holds for FI.

\subsection{FII$=F_{4(-20)}$}\label{sec-FII}
This subsection aims to handle FII, whose Vogan diagram is presented in Fig.~\ref{Fig-FII-Vogan}. In this case, $\Delta^+(\frg, \frt_f)$ is $F_4$, with simple roots
$\alpha_1, \alpha_2, \alpha_3, \alpha_4$. Here $\alpha_1, \alpha_2$ are short, while $\alpha_3, \alpha_4$ are long. We have that $W(\frg, \frt_f)^1=\{e, s_{\alpha_1}, s_{\alpha_1}s_{\alpha_2}\}$ and that
$$
\beta=[0, 0, 0, 1], \quad \rho_n=[2, 0, 0, 0], \quad \rho_n^{(1)}=[1, 0, 0, 1], \quad \rho_n^{(2)}=[0, 0, 1, 0].
$$

\begin{figure}[H]
\centering
\scalebox{0.6}{\includegraphics{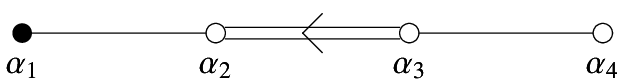}}
\caption{The Vogan diagram for FII}
\label{Fig-FII-Vogan}
\end{figure}

Guided by Lemma \ref{hyperplane-construction}, we calculate that the $\frk$-type $\mu=[a, b, c, d]$ is u-small if and only if
$$
a+2 b+2 c+d\leq 4, \quad  a+2b+ 3 c+2 d\leq 6.
$$
In particular, there are $27$ u-small $\frk$-types in total. As observed in the author's thesis,  any $\frk$-type whose spin norm is upper bounded by $\|\rho\|$ must be u-small.

Now let us consider the distribution of the spin norm along pencils. Let $\mu=[a, b, c, d]$ be a u-large $\frk$-type such that $\mu-\beta$ is dominant. Then $d\geq 1$ and $a, b, c\geq 0$. It is easy to calculate that
\begin{equation}\label{II-FII}
\| \mu-\rho_n^{(j)}  \|^2- \| \mu-\beta-\rho_n^{(j)}  \|^2=
\begin{cases}
a+2 b+3 c+2 d-3 & \mbox{ if } j=0; \\
a+2 b+3 c+2 d-4 & \mbox{ if } j=1, 2.
\end{cases}
\end{equation}
Similar to the EI case, it suffices to use elements from $W_1$ (resp., $W_2$, $W_3$, $W_4$) to conjugate all these $\mu-\beta-\rho_n^{(j)}$ to $C$ when $a\geq 3$ (resp., $b\geq 3$, $c\geq 3$, $d\geq 5$).  Moreover, we have
$$
2\langle \rho_c, w\beta\rangle \geq -1, \quad \forall w\in W_1; \quad
2\langle \rho_c, w\beta\rangle > 0, \quad \forall w\in W_2, W_3, W_4.
$$
Now in view of \eqref{Diff-j} and \eqref{II-FII}, the inequality \eqref{pencil-reduction-step} holds whenever $a\geq 4$, \emph{or} $b\geq 3$, \emph{or} $c\geq 3$, \emph{or} $d\geq 5$.

Finally, it remains to check \eqref{pencil-reduction-step} for any u-large $\mu=[a, b, c, d]$ such that $0\leq a\leq 3$, $0\leq b, c\leq 2$, $1\leq d\leq 4$. This has been carried out on a computer. Thus Theorem C holds for FII.

\subsection{$G_{2(2)}$}

This subsection aims to handle $G_{2(2)}$, whose Vogan diagram is presented in Fig.~\ref{Fig-G-Vogan}, where $\alpha_1=(1,-1,0)$ is short, while $\alpha_2=(-2, 1, 1)$ is long. In this case, $\Delta^+(\frg, \frt_f)$ is $G_2$, while $\Delta^+(\frk, \frt_f)$ is $A_1\times A_1$. Indeed,
$\Delta^+(\frk, \frt_f)$ consists of two orthogonal roots: $\gamma_1:=\alpha_1$, $\gamma_2:=3\alpha_1+2\alpha_2$. One calculates that
$$
\xi_1=2\alpha_1+\alpha_2, \quad \xi_2=3\alpha_1+2\alpha_2,
$$
and that
$$
\varpi_1=(1/2, -1/2, 0),\quad \varpi_2=(-1/2, -1/2, 1).
$$
Moreover, we have that $W(\frg, \frt_f)^1=\{e, s_{\alpha_2}, s_{\alpha_1+\alpha_2}s_{\alpha_2}\}$, and that
$$
\beta=[3, 1], \quad \rho_n=[0,2], \quad \rho_n^{(1)}=[3, 1], \quad \rho_n^{(2)}=[4, 0].
$$

\begin{figure}[H]
\centering
\scalebox{0.5}{\includegraphics{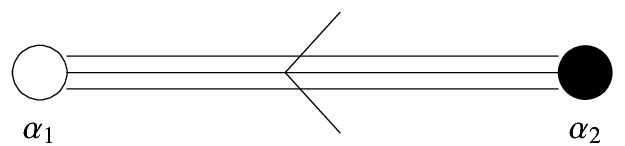}}
\caption{The Vogan diagram for $G_{2(2)}$}
\label{Fig-G-Vogan}
\end{figure}

Guided by Lemma \ref{hyperplane-construction}, we calculate that the $\frk$-type $\mu=[a, b]$ is u-small if and only if
$$
a+3b\leq 12, \quad a+b\leq 8.
$$
Thus we can draw the picture of the u-small convex hull as in Fig.~\ref{Fig-G-USmall}.
In particular, there are $29$ u-small $\frk$-types in total. Note that any $\frk$-type whose spin norm is upper bounded by $\|\rho\|$ must be u-small.

Now let us consider the distribution of the spin norm along pencils. It is easy to calculate that
\begin{equation}\label{II-G}
\| \mu-\rho_n^{(j)}  \|^2- \| \mu-\beta-\rho_n^{(j)}  \|^2=
\begin{cases}
3(a+b-4) & \mbox{ if } j=0; \\
3(a+b-6) & \mbox{ if } j=1, 2.
\end{cases}
\end{equation}
Assume that $\mu=[a, b]$ is u-large and that $\mu-\beta=[a-3, b-1]$ is dominant. In particular, $a\geq 3$, $b\geq 1$. Note that
$$
\mu-\beta-\rho_n=[a-3, b-3], \quad \mu-\beta-\rho_n^{(1)}=[a-6, b-2], \quad \mu-\beta-\rho_n^{(2)}=[a-7, b-1].
$$
The above expressions suggest that when $a\geq 7$ (resp. $b\geq 3$), we need only to use elements in the parabolic subgroup $W_1=\{e, s_{\gamma_2}\}$ (resp., $W_2=\{e, s_{\gamma_1}\}$) of $W(\frk, \frt_f)$ to conjugate all these $\mu-\beta-\rho_n^{(j)}$ to $C$. It is direct to check that
$$
2\langle \rho_c, w\beta\rangle\geq 0, \quad \forall w\in W_1, W_2.
$$
Note that the naive lower bound for I here is $-2\langle \rho_c, \beta\rangle=-6$.
Now in view of \eqref{Diff-j} and \eqref{II-G}, the inequality \eqref{pencil-reduction-step} holds whenever $a\geq 7$, \emph{or} $b\geq 4$.

Finally, the inequality \eqref{pencil-reduction-step} has been checked for any u-large $\mu=[a, b]$ such that $3\leq a\leq 6$ \emph{and} $1\leq b\leq 3$. Thus Theorem C holds for $G_{2(2)}$.

\section{${\rm Sp}(4, \bbR)$}\label{sec-Sp4R}
This section aims to consider $G(\bbR)={\rm Sp}(4, \bbR)$.
Then $K(\bbR)={\rm U}(2)$ and $T(\bbR)_f={\rm U}(1)\times {\rm U}(1)$. Thus $\frk$ \textbf{has center}.
We fix
$$
\Delta^+(\frk, \frt_f)=\{(1, -1)\}, \quad \Delta^+(\frp, \frt_f)=\{(2, 0), (0, 2), (1, 1)\}.
$$
Note that $\beta_1=(2, 0)$ and $\beta_2=(0, -2)$ are the two highest weights of the $K(\bbR)$-representation of $\frp$. Moreover, one calculates that
$$
\rho_n=(3/2, 3/2), \quad \rho_n^{(1)}=(3/2, -1/2),
\quad \rho_n^{(2)}=(1/2, -3/2),
\quad \rho_n^{(3)}=(-3/2, -3/2).
$$
Let $(p, q)$ denote the highest weight of a $K(\bbR)$-type, where $p\geq q$ are two integers. Then by Example 6.3 of \cite{SV}, we have that the $K(\bbR)$-type $(p, q)$ is u-small if and only if
$$
p\leq 3, \quad -3\leq q, \quad  p-q\leq 4.
$$
Thus we can draw $R(\Delta(\frp, \frt_f))\cap C$ in Fig.~\ref{Fig-Sp4R-USmall}, where the dotted circles stand for u-small $K(\bbR)$-types. Note that $2\rho_n$, $2\rho_n^{(1)}$, $2\rho_n^{(2)}$, $2\rho_n^{(3)}$ there are extremal points of the u-small convex hull. There are $25$ u-small $K(\bbR)$-types in total.

\begin{figure}[H]
\centering
\scalebox{0.76}{\includegraphics{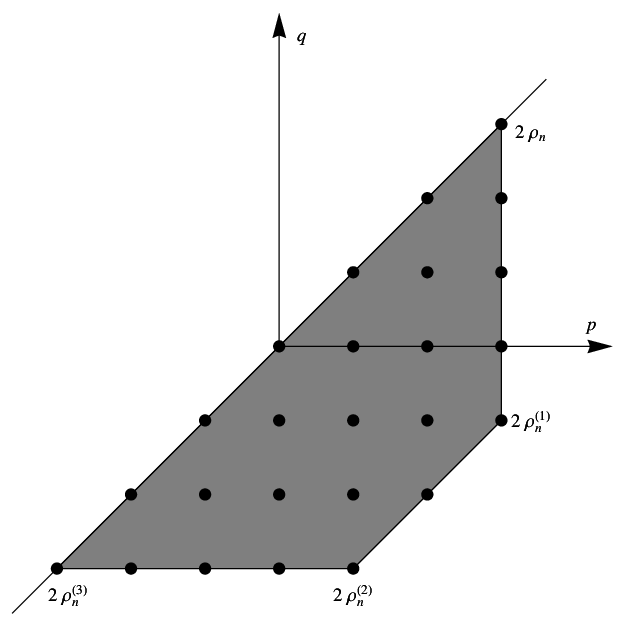}}
\caption{The ${\rm Sp}(4, \bbR)$ case}
\label{Fig-Sp4R-USmall}
\end{figure}

Take $\mu_m=(-m+2, -m)$, where $m\in\bbZ_{>0}$. Assume that $m$ is big enough. Then $\mu_m$ is u-large. Moreover, $\mu_m-\beta_1=(-m, -m)$ and $\mu_m-\beta_2=(-m+2, -m+2)$ are both dominant. One calculates that
\begin{align*}
\|\mu_m-\beta_2\|_{\rm spin}=\sqrt{2m^2-14m+25}&<\|\mu_m\|_{\rm spin}=\sqrt{2m^2-10m+17}\\
&<\|\mu_m-\beta_1\|_{\rm spin}=\sqrt{2m^2-6m+5}.
\end{align*}

Similarly, take $\mu_m^\prime=(m+2, m)$, where $m\in\bbZ_{>0}$. Assume that $m$ is big enough. Then $\mu_m^\prime$ is u-large. Moreover, $\mu_m^\prime-\beta_1=(m, m)$ and $\mu_m^\prime-\beta_2=(m+2, m+2)$ are both dominant. One calculates that
\begin{align*}
\|\mu_m^\prime-\beta_1\|_{\rm spin}=\sqrt{2m^2-6m+5}&<\|\mu_m^\prime\|_{\rm spin}=\sqrt{2m^2+2}\\
&<\|\mu_m^\prime-\beta_2\|_{\rm spin}=\sqrt{2m^2+2m+1}.
\end{align*}

\section{Computing some scattered members of $\widehat{G(\bbR)}^d$}\label{sec-algorithm}
This section aims to develop an explicit algorithm for computing some scattered members of $\widehat{G(\bbR)}^d$.
To be more precise, we will pin down every $\pi\in\widehat{G(\bbR)}^d$  such that it is \emph{not} cohomologically induced from a \emph{weakly good} module on a \emph{proper} $\theta$-stable Levi subgroup. Let $p=(x, \lambda, \nu)$ be the \texttt{atlas} parameter of  such a $\pi$. Based on Section 3, we proceed as follows:
\begin{itemize}
\item[(a)] Enumerate all the \emph{dominant} real infinitesimal characters $\Lambda$ satisfying \eqref{Lambda-bound} and that are conjugate to $\delta+\rho_c$ for certain $\frk$-type $\delta$.
\item[(b)] Enumerate all the KGB elements of $G(\bbR)$ that are fully supported.
\item[(c)] For each KGB element $x$ in (b), pick up from (a) those $\Lambda$ such that \eqref{nu-bound} holds for $\nu=\frac{\Lambda-\theta_x(\Lambda)}{2}$.
\item[(d)] For each KGB element $x$  in step (b), and for each $\Lambda$  in step (c), enumerate all the irreducible representations of $G(\bbR)$ with KGB element $x$ and infinitesimal character $\Lambda$ via the command
\begin{verbatim}
set all=all_parameters_x_gamma(x, Lambda)
\end{verbatim}
Further select the unitary ones out of the  above modules  via the command
\begin{verbatim}
for p in all do if is_unitary(p) then prints(p) fi od
\end{verbatim}
\item[(e)] For the modules surviving in step (d), check  whether they have Dirac cohomology or not via Theorem \ref{thm-HP}. More precisely, given the infinitesimal character $\Lambda$, this theorem allows us to enumerate all the (finitely many) $K(\bbR)$-types \texttt{CanK} that can possibly contribute to Dirac cohomology. Calculate the maximum \texttt{atlas} height $\texttt{ht}$ of the $K(\bbR)$-types in \texttt{CanK}. Then look at the $K(\bbR)$-types of the concerned representation up to this height via the command
\begin{verbatim}
branch_irr(p, ht)
\end{verbatim}
The irreducible unitary representation $\pi$ has non-zero Diac cohomology if and only if at least one $K(\bbR)$-type in \texttt{CanK} shows up in the output of the above command.
\end{itemize}

Step (b) above uses Theorem \ref{thm-Vogan}. Indeed, if $x$ is not fully supported, then any irreducible representation $\pi$ with KGB element $x$ must be cohomologically induced from a weakly good module on a proper $\theta$-stable Levi subgroup. Thus $\pi$ is not among the ones that we are seeking for.

Another remark is that if the group $G(\bbR)$---which must be simple by our assumptions---has trivial center, then any one-dimensional unitary character of $G(\bbR)$ must be trivial. Therefore, in such a case, to find the non-trivial scattered members of $\widehat{G(\bbR)}^d$,  Theorem \ref{thm-SR} allows us to focus on those $\Lambda$ which are \emph{not} strongly regular in step (a). This will reduce the workload significantly.

Let us illustrate this algorithm for the linear ${\rm EIV}=E_{6(-26)}$, which is realized
in \texttt{atlas}  as \texttt{E6\_F4}.
This group is centerless, connected and simply connected. Up to conjugation, this group has a unique $\theta$-stable Cartan subgroup. We adopt the root systems as in Section \ref{sec-EIV}. The algorithm now runs as follows:
\begin{itemize}
\item[$\bullet$]
Step (a) gives us $1147419$ candidates for $\Lambda$, among which $105003$ are not strongly regular. By Theorem \ref{thm-SR}, it suffices to focus on the latter ones.

\item[$\bullet$] EIV has $45$ KGB elements in total (two of which are listed below). The following ones are fully supported
$$
\#x=12, 14, 16, 17, 19, 20, 21, \quad  23\leq \#x\leq 44.
$$

\item[$\bullet$] Now say fix $\#x=19$. Then only $203$ infinitesimal characters from the first step meet the criterion \eqref{nu-bound}.
Only one representation survives after carrying out steps (d) and (e). This gives the first row of Table \ref{table-EIV-scattered-part}. This representation has infinitesimal character $[\frac{1}{2}, 1, \frac{1}{2}, 1, \frac{1}{2}, \frac{1}{2}]$. Note that here $\texttt{atlas}$ uses the fundamental weights of $\Delta^+(\frg, \frh_f)$ as a basis to express $\lambda$, $\nu$ and the infinitesimal character. This representation has a unique lambda lowest $K(\bbR)$-type $[2,0,0,0]$, which differs from its unique spin lowest $K(\bbR)$-type $[1,1,0,0]$.
\item[$\bullet$] All other fully supported KGB elements produce no non-trivial scattered members of $\widehat{\rm EIV}^d$.
\end{itemize}
To sum up, the scattered members of $\widehat{\rm EIV}^d$ that we have obtained are given in Table \ref{table-EIV-scattered-part}, where in the second row sits the trivial representation.

\begin{table}[H]
\centering
\caption{Some scattered members of $\widehat{\rm EIV}^{\mathrm{d}}$}
\begin{tabular}{l|c|c|c|c|r}
$\# x$ &   $\lambda$  & $\nu$ & spin LKTs & mult & u-small\\
\hline
$19$ & $[1,2,0,1,0,1]$ & $[\frac{3}{2},3,-\frac{3}{2},0,-\frac{3}{2},\frac{3}{2}]$ & $[1, 1, 0, 0]$ & $1$ & Yes\\
$44$ & $[1,1,1,1,1,1]$ & $[4,0,0,0,0,4]$ & $[0, 0, 0, 0]$ & $1$ & Yes
\end{tabular}
\label{table-EIV-scattered-part}
\end{table}

Some information of the two KGB elements involved in Table \ref{table-EIV-scattered-part} is listed below. Example 14.19 of \cite{AC}  carefully explains the entries.
\begin{verbatim}
19:   5  [C,C,C,c,C,C]   12  13  23  19  24  14    1x2x4x3x1xe
44:  12  [C,c,c,c,c,C]   42  44  44  44  44  43    1x3x4x2x6x5x4x2x3x1x4x3xe
\end{verbatim}

In a subsequent paper, we will show that with additional effort one can completely pin down $\widehat{\rm EIV}^d$, and that Table \ref{table-EIV-scattered-part} turns out to exhaust \emph{all} the scattered members of $\widehat{\rm EIV}^d$. Actually, we plan to report $\widehat{G(\bbR)}^d$ for several real exceptional Lie groups in future. In particular, the size of the scattered part of $\widehat{G(\bbR)}^d$ turns out to be closely related to the number of u-small $K(\bbR)$-types. For instance, recall from Section \ref{sec-EIV} that \texttt{E6\_F4} has only $37$ u-small $K(\bbR)$-types. On the other hand, the algorithm will give us twenty two scattered members of $\widehat{\rm FI}^d$. Here by FI we actually mean the group \texttt{F4\_s} in $\texttt{atlas}$. This group is centerless, connected, but \emph{not} simply connected. It has $544$ u-small $K(\bbR)$-types (recall from Section \ref{sec-FI} that the number for its universal covering group is $1045$).

\end{document}